\newtheorem{Th}{Theorem}[section]
\newtheorem{Lem}[Th]{Lemma}
\newtheorem{Cor}[Th]{Corollary}
\newtheorem{Rem}[Th]{Remark}
\newcommand{\wt}{\widetilde}
\newcommand{\eps}{\varepsilon}
\newcommand{\R}{\mathbb{R}}
\newcommand{\Z}{\mathbb{Z}}
\newcommand{\cB}{{\mathcal B}}
\newcommand{\cC}{{\mathcal C}}
\newcommand{\cD}{{\mathcal D}}
\newcommand{\cE}{{\mathcal E}}
\newcommand{\cF}{{\mathcal F}}
\newcommand{\cH}{{\mathcal H}}
\newcommand{\cI}{{\mathcal I}}
\newcommand{\cJ}{{\mathcal J}}
\newcommand{\cM}{{\mathcal M}}
\newcommand{\cN}{{\mathcal N}}
\newcommand{\cO}{{\mathcal O}}
\newcommand{\cP}{{\mathcal P}}
\newcommand{\cS}{{\mathcal S}}
\newcommand{\Ga}{\Gamma}
\newcommand{\weakto}{\rightharpoonup}
\newcommand{\curl}{\nabla \times}
\renewcommand{\div}{\mathrm{div}\,}
\newcommand{\spann}{\mathrm{span}\,}
\numberwithin{equation}{section}
\DeclareMathOperator*{\essinf}{ess\,inf}
\begin{document}


\title{Solutions to a nonlinear Maxwell equation with two competing nonlinearities in $\mathbb{R}^3$}

\author[B. Bieganowski]{Bartosz Bieganowski}
\address[B. Bieganowski]{\newline\indent  	Institute of Mathematics,		\newline\indent 	Polish Academy of Sciences, \newline\indent ul. \'Sniadeckich 8, 00-956 Warszawa, Poland
\newline\indent and
\newline\indent  	Faculty of Mathematics and Computer Science,		\newline\indent 	Nicolaus Copernicus University, \newline\indent ul. Chopina 12/18, 87-100 Toru\'n, Poland}
\email{\href{mailto:bbieganowski@impan.pl}{bbieganowski@impan.pl}}		\email{\href{mailto:bartoszb@mat.umk.pl}{bartoszb@mat.umk.pl}}	

\date{}	\date{\today} 

\begin{abstract} 
We are interested in the nonlinear, time-harmonic Maxwell equation
$$
\curl (\curl \mathbf{E} ) + V(x) \mathbf{E} = h(x, \mathbf{E})\mbox{ in } \R^3
$$
with sign-changing nonlinear term $h$, i.e. we assume that $h$ is of the form
$$
h(x, \alpha w) = f(x, \alpha) w - g(x, \alpha) w
$$
for $w \in \R^3$, $|w|=1$ and $\alpha \in \R$. In particular, we can consider the nonlinearity consisting of two competing powers $h(x, \mathbf{E}) = |\mathbf{E}|^{p-2}\mathbf{E} - |\mathbf{E}|^{q-2}\mathbf{E}$ with $2 < q < p < 6$. Under appriopriate assumptions, we show that weak, cylindrically equivariant solutions of the special form are in one-to-one correspondence with weak solutions to a Schr\"odinger equation with a singular potential. Using this equivalence result we show the existence of the least energy solution among cylindrically equivariant solutions of the particular form to the Maxwell equation, as well as to the Schr\"odinger equation.

\medskip

\noindent \textbf{Keywords:} variational methods, Maxwell equations, singular potential, nonlinear Schr\"odinger equation, sign-changing nonlinearities
   
\noindent \textbf{AMS Subject Classification:}  35Q60, 35J20, 78A25
\end{abstract}

\maketitle

\section{Introduction}

In the electromagnetism, the behaviour of the electric field $\cE$, magnetic field $\cB$, electric displacement field $\cD$ and magnetic induction $\cH$ is described by the system of Maxwell equations
$$
\left\{ \begin{array}{l}
\curl \cH = \cJ + \frac{\partial \cD}{\partial t} \\
\div (\cD) = \rho \\
\frac{\partial \cB}{\partial t} + \curl \cE = 0 \\
\div (\cB) = 0,
\end{array} \right.
$$
where $\cJ$ denotes the electric current intensity and $\rho$ the electric charge density. We consider these equations in $\R^3$. Then, we can introduce the constitutive relations
$$
\left\{ 
\begin{array}{l}
\cD = \varepsilon \cE + \cP \\
\cH = \frac{1}{\mu} \cB - \cM,
\end{array} \right.
$$
where $\cP, \cM : \R^3 \times \R \rightarrow \mathbb{C}^3$ denote the polarization and the magnetization respectively, and $\varepsilon, \mu : \R^3 \rightarrow \R$ denote the permittivity and permeability of the medium. We are interested in study the electromagnetic waves in the absence of charges, currents and magnetization, i.e. we assume that $\cJ \equiv 0$, $\cM \equiv 0$, $\rho = 0$. Then, the system of Maxwell equations with the constitutive relations lead to
$$
\curl \left( \frac{1}{\mu} \curl \cE \right) + \varepsilon \frac{\partial^2 \cE}{\partial t^2} = - \frac{\partial^2 \cP}{\partial t^2}.
$$
For more physical background see eg. \cite{Dorfler, K, Nie}. Assuming that $\mu \equiv 1$ is constant and looking for time-harmonic fields $\cE = \mathbf{E}(x) e^{i \omega t}$, $\cP = \mathbf{P}(x) e^{i\omega t}$, where $\mathbf{P}$ depends (nonlinearily) on $\mathbf{E}$ lead to the general, time-harmonic Maxwell equation (\textit{curl-curl problem})
\begin{equation}\label{eq:maxwell}
\curl (\curl \mathbf{E} ) + V(x) \mathbf{E} = h(x, \mathbf{E}), \quad x \in \R^3.
\end{equation}
We assume that $h$ is of the form 
\begin{equation}\label{eq:h}
h(x, \alpha w) = f(x, \alpha) w - g(x, \alpha) w
\end{equation}
for all $w \in \partial B(0, 1) \subset \R^3$, $\alpha \in \R$ and a.e. $x \in \R^3$, where $f,g : \R^3 \times \R \rightarrow \R$ satisfy assumptions described below.

Equation \eqref{eq:maxwell} with periodic and sign-changing potentials $V$ has been studied in \cite{BDPR} in cylindrically symmetric setting. On the other hand negative and bounded away from 0 potentials were studied in \cite{Med}. The Maxwell equation in bounded domain $\Omega \subset \R^3$ with the metallic boundary condition
$$
\nu \times \mathbf{E} = 0 \quad \mbox{on } \partial \Omega,
$$
where $\nu : \partial \Omega \rightarrow \R^3$ is the exterior normal vector field, has been studied in a series of papers by Bartsch and Mederski (see \cite{BM1, BM2, BM3}). Their approach is variational and is based on the Helmholtz decomposition and the Nehari-Pankov manifold method. We notice that the problem has been studied also by means of numerical methods (see eg. \cite{Monk}).

Observe that the kernel of $\curl\curl$ has an infinite dimension, and the variational functional associated to \eqref{eq:maxwell} is unbounded from below and from above (in fact, it is strongly indefinite), its critical points have infinite Morse index. Moreover its derivative is not weak-to-weak* continuous. Hence, we do not know whether a limit of a bounded Palais-Smale (or Cerami) sequence is a critical point. The application of a linking type argument in the spirit of \cite{KrSz} is also not immediate. Hence, we will look for \textit{cylindrically equivariant solutions}, which reduces the problem to the Schr\"odinger equation with a singular potential. The reduction is well-known and easy to compute in a case of classical solutions, i.e. looking for classical solutions of the form
\begin{equation}\label{eq:form}
\mathbf{E} (x) = \frac{u(r, x_3)}{r} \left( \begin{array}{c} -x_2 \\ x_1 \\ 0
\end{array} \right), \quad r = \sqrt{x_1^2 + x_2^2}
\end{equation}
we see that $\div \mathbf{E} = 0$ and $u$ satisfies
\begin{equation}\label{eq:schrodinger}
-\Delta u + \frac{u}{r^2} + V(r, x_3) u = f(x,u) - g(x,u),
\end{equation}
where $\Delta = \frac{\partial^2}{\partial r^2} + \frac{1}{r} \frac{\partial}{\partial r} + \frac{\partial^2}{\partial x_3^2}$ is the 3-dimensional Laplace operator in cylindrically symmetric coordinates $(r, x_3)$. We will show this fact also for weak solutions and extend the very recent analysis by Gaczkowski, Mederski, Schino (\cite{GMS}).

\begin{itemize}
\item[(V)] $V \in L^\infty (\R^3)$, $V = V(r, x_3)$ is cylindrically symmetric with $r = \sqrt{x_1^2 + x_2^2}$, $1$-periodic in $x_3$ and 
\begin{equation}\label{spektrum}
\inf \sigma \left(-\Delta + \frac{1}{r^2} + V(r,x_3) \right) = \inf \sigma \left( -\frac{\partial^2}{\partial r^2} - \frac{1}{r} \frac{\partial}{\partial r} - \frac{\partial^2}{\partial x_3^2} + \frac{1}{r^2} + V(r, x_3) \right) > 0.
\end{equation}
\end{itemize}

Note that \eqref{spektrum} is satisfied if $V \in L^\infty (\R^3)$ is cylindrically symmetric, 1-periodic in $x_3$ and e.g. $\essinf_{\R^3} V > 0$. There are also sign-changing potentials satisfying (V). Consider the following function
$$
V(r, x_3) = -\frac{1}{2r^2} \chi_{[1,+\infty)}(r) + \left( \frac14 + \frac18 \cos(2 \pi x_3) \right).
$$
It is clear that $V$ satisfies (V), since $\frac{1}{r^2} + V(r, x_3) \geq \frac{1}{2r^2} + \frac18$. Moreover, for $r = 1$ we get $V(1,x_3) = -\frac14 + \frac18 \cos(2\pi x_3) \leq -\frac18$.

In what follows $O(N)$ denotes the group of real, orthogonal $N\times N$-matrices. We will consider also the group
$$
SO(N) = \{ g \in O(N) \ : \ \det(g) = 1 \}.
$$ 
The action of $O(N)$ (or $SO(N)$) on $\R^N$ is given by the multiplication by matrices, i.e.
$$
O(N) \times \R^N \ni (g, x) \mapsto g x \in \R^N.
$$ 

We assume the following
\begin{enumerate}
\item[(F1)] $f : \R^3 \times \R \rightarrow \R$ is measurable in $x \in \R^3$, $1$-periodic in $x_3$ and continuous in $u \in \R$, $O(2) \times \{ I \}$ invariant in $x \in \R^3$ and there is $2 < p < 6$ such that
$$
|f(x,u)| \leq c (1 + |u|^{p-1}) \mbox{ for all } u \in \R \mbox{ and a.e. } x \in \R^3.
$$
\item[(F2)] $f(x,u) = o(|u|)$ uniformly in $x$ as $u \to 0$.
\item[(F3)] $F(x,u) / |u|^q \to +\infty$ uniformly in $x$ as $|u| \to +\infty$, where $F (x, u) := \int_0^u f(x, s) \, ds$, and $F(x,u) \geq 0$ for all $u \in \R$ and a.e. $x \in \R^3$.
\item[(F4)] $u \mapsto f(x,u)/|u|^{q-1}$ is nondecreasing on $(-\infty, 0)$ and on $(0,+ \infty)$.
\item[(G1)] $g : \R^3 \times \R \rightarrow \R$ is measurable in $x \in \R^3$, $1$-periodic in $x_3$ and continuous in $u \in \R$, $O(2) \times \{ I \}$ invariant in $x \in \R^3$ and there is $2 < q < p$ such that
$$
|g(x,u)| \leq c (1 + |u|^{q-1}) \mbox{ for all } u \in \R \mbox{ and a.e. } x \in \R^3.
$$
\item[(G2)] $g(x,u) = o(|u|)$ uniformly in $x$ as $u \to 0$.
\item[(G3)] $u \mapsto g(x,u)/|u|^{q-1}$ is nonincreasing on $(-\infty, 0)$ and on $(0,+ \infty)$, and there holds 
$$
g(x,u) u \geq 0  \mbox{ for all } u \in \R \mbox{ and a.e. } x \in \R^3.
$$
\end{enumerate}

It is clear that the pure power nonlinearity $g(x,u) = \Ga(r, x_3) |u|^{q-2} u$ with $2 < q < p$ and cylindrically symmetric, positive and bounded away from zero $\Ga \in L^\infty(\R^3)$ satisfies (G1)--(G3). 

Note that (F1) and (F2) imply that for any $\eps > 0$ there is $C_\eps > 0$ such that
\begin{equation}\label{f-eps}
|f(x,u)| \leq \eps |u| + C_\eps |u|^{p-1}.
\end{equation}
Similarly, (G1), (G2) imply the inequality
\begin{equation}\label{g-eps}
|g(x,u)| \leq \eps |u| + C_\eps |u|^{q-1}.
\end{equation}

In what follows we shall denote also $\wt{f}(x,u) := f(x,u) - g(x,u)$ and $\wt{F}(x,u) := F(x,u) - G(x,u)$.

Our first result concerns the correspondence between weak solutions of \eqref{eq:maxwell} and \eqref{eq:schrodinger}.

\begin{Th}\label{Th:1}
Suppose that (V) holds and $\wt{f} : \R^3 \times \R \rightarrow \R$ is measurable and $O(2) \times \{I\}$ invariant in $x \in \R^3$, continuous in $u \in \R$ and satisfies
$$
|\wt{f}(x,u)| \leq c (|u| + |u|^5)\mbox{ for all } u \in \R \mbox{ and a.e. } x \in \R^3.
$$
If $\mathbf{E} \in H^1 (\R^3; \R^3)$ of the form \eqref{eq:form} is, for some cylindrically symmetric $u$, a weak solution to \eqref{eq:maxwell}, then $u \in H^1 (\R^3)$ and $u$ is a weak solution to \eqref{eq:schrodinger}. If $u \in H^1 (\R^3)$ is a cylindrically symmetric, weak solution to \eqref{eq:schrodinger} then $\mathbf{E} \in H^1 (\R^3; \R^3)$ is a weak solution to \eqref{eq:maxwell}, where $\mathbf{E}$ is given by \eqref{eq:form}. Moreover $\div \mathbf{E} = 0$ and $\cE(\mathbf{E}) = \cJ(u)$, where $\cE$ and $\cJ$ are energy functionals defined by \eqref{eq:E} and \eqref{eq:J}, respectively.
\end{Th}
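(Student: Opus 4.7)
\begin{altproof}{Theorem \ref{Th:1}}
The plan is to reduce the correspondence to a pointwise computation of $\curl \mathbf{E}$ and of the nonlinear term for $\mathbf{E}$ of the form \eqref{eq:form}, and then to bridge the weak forms of \eqref{eq:maxwell} and \eqref{eq:schrodinger} via a class of test vector fields sharing the same cylindrical structure, together with the principle of symmetric criticality.

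First I will write $\boldsymbol{\theta}(x) := \tfrac{1}{r}(-x_2, x_1, 0)$ for the azimuthal unit field, so that $\mathbf{E} = u(r,x_3)\,\boldsymbol{\theta}$, and compute coordinate-wise
$$
\curl \mathbf{E} = -\frac{\partial_{x_3}\! u}{r}\,(x_1,x_2,0) + \left(\partial_r u + \frac{u}{r}\right) e_3,
\qquad \div \mathbf{E} = 0.
$$
Squaring and integrating the resulting cross term $\int \partial_r (u^2)\, dr\, dx_3$ by parts (which vanishes because $u(0,x_3) = 0$ is forced by continuity of $\mathbf{E}$ across the axis and $u$ decays at infinity), I obtain the quadratic identity
$$
\int_{\R^3} |\curl \mathbf{E}|^2\, dx = \int_{\R^3} \left(|\nabla u|^2 + \frac{u^2}{r^2}\right) dx.
$$
Together with $|\mathbf{E}|^2 = u^2$ and the pointwise relation $h(x,\alpha\boldsymbol{\theta}) = \wt f(x,\alpha)\boldsymbol{\theta}$ coming from \eqref{eq:h}, this yields both the norm equivalence $\mathbf{E} \in H^1(\R^3;\R^3)$ $\Leftrightarrow$ $u \in H^1(\R^3)$ (with the hidden Hardy-type integrability $u/r \in L^2$ built into \eqref{spektrum}), the identity $\div \mathbf{E} = 0$, and the energy identity $\cE(\mathbf{E}) = \cJ(u)$.

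For \eqref{eq:maxwell} $\Rightarrow$ \eqref{eq:schrodinger}, I test the weak form of \eqref{eq:maxwell} against $\boldsymbol{\phi}(x) := \tfrac{v(r,x_3)}{r}(-x_2,x_1,0)$ for cylindrically symmetric $v \in C_c^\infty$; applying the $\curl$-formula above to $\boldsymbol{\phi}$ and pairing termwise collapses the identity to the weak form of \eqref{eq:schrodinger} tested against $v$. Extension from symmetric $v$ to arbitrary $v$ in the natural test space follows by averaging: for a symmetric $u$ and cylindrically symmetric coefficients, the bilinear form is unchanged after replacing $v$ by its $O(2)\times\{I\}$-average, which is itself cylindrically symmetric. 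Conversely, given a cylindrically symmetric weak solution $u$ of \eqref{eq:schrodinger}, the norm equivalence places $\mathbf{E}$ defined by \eqref{eq:form} in $H^1$. To test \eqref{eq:maxwell} against an arbitrary $\boldsymbol{\psi} \in C_c^\infty(\R^3;\R^3)$ I first average $\boldsymbol{\psi}$ over $O(2)\times\{I\}$, which leaves the pairings with the equivariant fields $\curl \mathbf{E}$, $V \mathbf{E}$ and $h(x,\mathbf{E})$ unchanged, and then decompose the average into a radial-axial part and an azimuthal part $\tfrac{\wti v}{r}(-x_2,x_1,0)$. A direct cylindrical-coordinate computation shows that the radial-axial part is pointwise orthogonal to $\mathbf{E}$ and $h(x,\mathbf{E})$, and that its curl sits in the $\boldsymbol{\theta}$-direction while $\curl \mathbf{E}$ has no $\boldsymbol{\theta}$-component; hence the Maxwell weak form reduces to the Schr\"odinger weak form tested against $\wti v$, which holds by hypothesis.

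The main technical obstacle is justifying the integration by parts that matches the quadratic forms while handling the singular term $1/r^2$: one needs density of smooth cylindrically symmetric functions supported off the axis in the natural energy space, so that boundary contributions on $\{r=0\}$ drop, and one uses \eqref{spektrum} to control the resulting Hardy-type term uniformly. A secondary subtlety is making the group-averaging arguments precise in the setting of $H(\curl)$-type regularity rather than smooth fields; this essentially constitutes the promised extension of the analysis of \cite{GMS} from classical to weak solutions.
\end{altproof}
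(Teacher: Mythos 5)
Your overall strategy coincides with the paper's: the pointwise computation of $\curl \mathbf{E}$ and the quadratic identity, azimuthal test fields $\frac{v}{r}(-x_2,x_1,0)$ for the Maxwell-to-Schr\"odinger direction, and group averaging plus the $\rho/\tau/\zeta$ orthogonal decomposition for the converse --- the latter being an explicit, by-hand version of the paper's appeal to Palais' symmetric criticality with the action $\mathscr{S}\mathbf{E} = -\mathbf{P}_\rho + \mathbf{P}_\tau - \mathbf{P}_\zeta$ (and your identity $\int_{\R^3}|\curl\mathbf{E}|^2\,dx = \int_{\R^3}|\nabla u|^2 + u^2/r^2\,dx$ is the correct one, consistent with $\cE(\mathbf{E})=\cJ(u)$; the display in Lemma \ref{lem:4.1} omitting the Hardy term is a typo in the paper). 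So this is not a different route; the question is whether you have actually proved the statement at the level of \emph{weak} solutions.

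Here there is a genuine gap, and it sits exactly where the theorem goes beyond the classical computation. You kill the boundary term in $\int \partial_r(u^2)\,dr\,dx_3$ by asserting that $u(0,x_3)=0$ ``is forced by continuity of $\mathbf{E}$ across the axis''; but a general $\mathbf{E}\in H^1(\R^3;\R^3)$ of the form \eqref{eq:form} is not continuous and has no pointwise trace on the axis, and a priori $u$ is merely measurable. Likewise, the Hardy-type integrability $u/r\in L^2(\R^3)$ does not come from \eqref{spektrum}, which is a spectral positivity assumption on the operator; it must be \emph{derived} from $\mathbf{E}\in H^1$, via the fact that $|\nabla\mathbf{E}|^2$ contains the term $u^2/r^2$ --- and justifying that identity is circular if it itself rests on the integration by parts you are trying to validate. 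The paper's remedy is precisely its Section 3: an $SO$-equivariant mollification lemma and the characterization $\cD = \overline{\cC_0(\R^3;\R^3)\cap\cC^\infty(\R^3_*;\R^3)\cap\cH\cap\cD}$, where membership in $\cH$ (the bound $|\mathbf{E}(x)|\le Cr$ near the axis) is obtained for the approximants by a Taylor expansion of the smooth projections, not assumed; all pointwise computations are performed on this class and transferred to general $\mathbf{E}\in\cD$, $u\in X$ by a Cauchy-sequence argument in the $X$-norm (Lemma \ref{lem:4.1}). You correctly name the required density of smooth symmetric functions supported off the axis as ``the main technical obstacle'', but you neither prove it nor reduce it to a citation, and your converse direction additionally uses without verification that the projections $\mathbf{P}_\rho,\mathbf{P}_\tau,\mathbf{P}_\zeta$ of the averaged test field are admissible ($H^1$, with $\wt{v}\in X$ for the $\tau$-part) --- the content of \cite[Lemma 1]{Azzolini} and \cite[Proposition 2.3]{GMS}. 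Since this approximation machinery is the actual new content separating the weak statement from the well-known classical reduction, the proposal as written establishes the theorem only for fields regular near the axis. A minor further point: your averaging step ``from symmetric $v$ to arbitrary $v$'' in the forward direction is vacuous, since the paper's test space $X$ consists by definition of cylindrically symmetric functions; what is needed there is, again, density of your smooth test class in $X$.
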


We observe that (F1), (F2), (G1) and (G2) imply that $\wt{f}$ satisfies assumptions in Theorem \ref{Th:1}.

Now we are ready to state our existence results.

\begin{Th}\label{Th:2}
Suppose that (V), (F1)--(F4), (G1)--(G3) hold. Then there exists a cylindrically symmetric, weak solution $u \in H^1 (\R^3)$ to \eqref{eq:schrodinger} being the least energy solution among all cylindrically symmetric solutions with $\int_{\R^3} \frac{u^2}{r^2} \, dx < +\infty$.
\end{Th}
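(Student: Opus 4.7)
My plan is to minimize the Euler--Lagrange functional
$$\cJ(u) := \tfrac12\|u\|^2 - \int_{\R^3} \wt F(x,u)\, dx$$
on the Nehari set associated to \eqref{eq:schrodinger}, working in the Hilbert space
$$X := \Bigl\{u \in H^1(\R^3) : u \text{ is } O(2)\times\{I\}\text{-invariant and } \int_{\R^3} \tfrac{u^2}{r^2}\, dx < \infty\Bigr\}$$
with the equivalent inner-product norm $\|u\|^2 := \int_{\R^3}(|\nabla u|^2 + u^2/r^2 + V u^2)\, dx$ (positivity from (V) and \eqref{spektrum}). Assumptions (F1), (G1) together with Sobolev embeddings give $\cJ \in C^1(X)$. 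The matched monotonicity conditions (F4) and (G3), combined with $g(x,u)u \geq 0$, are the key ingredient: they ensure that for each $u \in X\setminus\{0\}$ the map $t \mapsto \cJ(tu)$ admits a unique positive maximizer $t(u) > 0$, so that $t(u)u \in \cN := \{v \in X\setminus\{0\} : \cJ'(v)v = 0\}$. A short computation based on \eqref{f-eps}, \eqref{g-eps} then yields both $\|u\| \geq c_0 > 0$ on $\cN$ and $c := \inf_\cN \cJ > 0$.

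Next, I would invoke Ekeland's variational principle adapted to $\cN$ (for instance via the Szulkin--Weth bijection with the unit sphere in $X$) to obtain $(u_n) \subset \cN$ with $\cJ(u_n) \to c$ and $\cJ'(u_n) \to 0$ in $X^*$. Integrating (F4) and (G3) one obtains $\tfrac1q \wt f(x,s)s - \wt F(x,s) \geq 0$ for every $s$, hence
$$c + o(1) + o(\|u_n\|) = \cJ(u_n) - \tfrac1q \cJ'(u_n) u_n \geq \left(\tfrac12 - \tfrac1q\right)\|u_n\|^2,$$
which, since $q > 2$, bounds $(u_n)$ in $X$. I then apply Lions' vanishing lemma: if $\sup_{y \in \R^3}\int_{B(y,1)} u_n^2 \to 0$, then $u_n \to 0$ in $L^p(\R^3)$ for every $p \in (2,6)$, and the Nehari identity gives $\|u_n\|\to 0$, contradicting $\|u_n\| \geq c_0$. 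So there exist $y_n = (y_n', \tau_n) \in \R^2 \times \R$ and $\delta > 0$ with $\int_{B(y_n,1)} u_n^2 \geq \delta$. Cylindrical symmetry of $u_n$ forces $|y_n'|$ to stay bounded, since otherwise the integral over the full $O(2)$-orbit of $B(y_n,1)$ (a cylindrical annulus of radius $\sim |y_n'|$) would diverge, contradicting the $L^2$-bound. Using 1-periodicity of $V, f, g$ in $x_3$, replace $\tau_n$ by $\lfloor \tau_n \rfloor \in \Z$: the translated sequence $v_n(\cdot) := u_n(\cdot,\, \cdot + \lfloor \tau_n \rfloor)$ is still cylindrically symmetric, lies in $\cN$, satisfies $\cJ(v_n)\to c$ and $\cJ'(v_n)\to 0$, and its non-vanishing ball now sits in a fixed compact region of $\R^3$.

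Passing to a subsequence, $v_n \weakto u$ in $X$ with $u \neq 0$ by local $L^2$-compactness of $H^1 \hookrightarrow L^2_{\loc}$. The subcritical growth $p < 6$ together with local Sobolev compactness yields $\cJ'(u)\vp = 0$ for every cylindrically symmetric test function, so $\cJ'(u) = 0$ in $X^*$ and $u \in \cN$. By weak lower semicontinuity of $\|\cdot\|^2$ and Fatou's lemma applied to the nonnegative density $\tfrac1q\wt f(x,v_n)v_n - \wt F(x,v_n)$,
$$\cJ(u) = \left(\tfrac12 - \tfrac1q\right)\|u\|^2 + \int_{\R^3}\left(\tfrac1q\wt f(x,u)u - \wt F(x,u)\right) dx \leq \liminf_n\left[\cJ(v_n) - \tfrac1q\cJ'(v_n)v_n\right] = c,$$
while $u \in \cN$ forces $\cJ(u) \geq c$, so $\cJ(u) = c$. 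Any cylindrically symmetric weak solution of \eqref{eq:schrodinger} in the prescribed class automatically belongs to $\cN$, so $u$ is least energy in the required sense.

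The principal obstacle is the loss of compactness coming from $x_3$-translation invariance: cylindrical symmetry alone does not make $X \hookrightarrow L^p(\R^3)$ compact. The decisive point is that cylindrical symmetry does eliminate concentration away from the $x_3$-axis (the annulus blow-up argument above), leaving only translation concentration along $x_3$, which is neutralized by integer translations thanks to the 1-periodicity of the coefficients. A secondary subtlety is that the sign-changing structure $\wt f = f - g$ has to be reconciled with the Nehari framework; the matched exponent $q$ appearing in both (F4) and (G3) is precisely what guarantees the uniqueness of the Nehari projection and the nonnegativity of the density used in the Fatou step.
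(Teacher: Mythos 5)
Your overall architecture (Nehari level, bounded Cerami/PS sequence, ruling out vanishing by cylindrical symmetry, integer translation in $x_3$, weak-to-weak* continuity plus Fatou to hit the level $c$) matches the paper's proof closely, and those steps are sound: your annulus argument for why the concentration points must stay near the $x_3$-axis is a hands-on substitute for the ready-made cylindrical Lions-type lemma (Corollary \ref{lem:lions}) that the paper imports from \cite{Mederski}, and your Fatou step uses exactly the inequality $q\wt F(x,u)\leq \wt f(x,u)u$ of Corollary \ref{AR-fg}.

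However, there is one genuine gap: your claim that (F4) and (G3) guarantee a \emph{unique} maximizer $t(u)$ of $t\mapsto\cJ(tu)$, on which you then build the Szulkin--Weth bijection $\cN\leftrightarrow S$ and Ekeland's principle. This is false under the stated hypotheses. Both monotonicity assumptions are non-strict ($f(x,u)/|u|^{q-1}$ nondecreasing, $g(x,u)/|u|^{q-1}$ nonincreasing), so the quotient $\wt f(x,u)/|u|^{q-1}$ can be constant on intervals, and the set of maximizers of $\cJ(tu)$ along a ray can be a nondegenerate interval $[t_{\min}(u),t_{\max}(u)]$; the paper makes exactly this point (Remark \ref{rem:5.2} and the remark following it: the homeomorphism with the unit sphere ``does not need to exist and we cannot apply directly the Nehari manifold method''). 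Note also that you cannot fall back on Ekeland directly on $\cN$ as a constraint manifold, since $f$ is only continuous in $u$, so $\cN$ is merely a topological, not a $\cC^1$, manifold. The fix is what the paper does in Theorem \ref{abstract}: run the mountain pass theorem on $X$ to produce a Cerami sequence at the mountain pass level, and use the inequality in (J3) — which holds with possibly non-strict sign — only to prove the \emph{identification} $c=\inf_\cN\cJ=\inf_{u\neq 0}\sup_{t>0}\cJ(tu)>0$, never the uniqueness of the projection. (Alternatively one can use the set-valued projection $\hat m(u)=[t_{\min}(u),t_{\max}(u)]u$ of \cite{dPKrSz}.) With that replacement, the remainder of your argument goes through and coincides with the paper's proof of Theorem \ref{Th:2}; one further small repair: after translating, your limit must be checked to satisfy $\cJ(v_0)=c$ via the Fatou computation, since $v_n\to v_0$ is only weak — you do this correctly, but the conclusion ``$\cJ(u)\geq c$ since $u\in\cN$'' is what requires $c=\inf_\cN\cJ$, again supplied by the mountain pass identification rather than by any uniqueness of $t(u)$.
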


As a consequence of Theorem \ref{Th:1} and Theorem \ref{Th:2} we obtain the following existence result.

\begin{Th}\label{Th:3}
Suppose that (V), (F1)--(F4), (G1)--(G3) hold. Then there exists a weak solution $\mathbf{E} \in H^1 (\R^3; \R^3)$ of \eqref{eq:maxwell} of the form \eqref{eq:form}, for some cylindrically symmetric $u \in H^1 (\R^3)$. Moreover $\mathbf{E}$ is the least energy solution among all solutions of the form \eqref{eq:form}.
\end{Th}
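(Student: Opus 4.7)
\begin{altproof}{Theorem \ref{Th:3}}

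My plan is to obtain the solution directly by combining the two previous theorems. Theorem~\ref{Th:2} provides, under (V), (F1)--(F4) and (G1)--(G3), a cylindrically symmetric weak solution $u \in H^1(\R^3)$ to \eqref{eq:schrodinger} which is the least energy solution among cylindrically symmetric solutions satisfying $\int_{\R^3} u^2/r^2\,dx < +\infty$. Define $\mathbf{E}$ by the formula \eqref{eq:form}. By the second half of Theorem~\ref{Th:1} (note that $\wt f = f-g$ satisfies the growth bound required there by \eqref{f-eps} and \eqref{g-eps} together with $p<6$), the field $\mathbf{E}$ lies in $H^1(\R^3;\R^3)$ and is a weak solution of \eqref{eq:maxwell}; moreover $\div \mathbf{E}=0$ and $\cE(\mathbf{E}) = \cJ(u)$. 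This establishes existence.

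The substantive content is to verify the \emph{least energy} claim, and the strategy is to transfer minimality across the correspondence of Theorem~\ref{Th:1}. Let $\mathbf{E}' \in H^1(\R^3;\R^3)$ be any weak solution to \eqref{eq:maxwell} of the form \eqref{eq:form} determined by some cylindrically symmetric function $u'$. The first half of Theorem~\ref{Th:1} then ensures $u' \in H^1(\R^3)$ and that $u'$ is a weak solution of \eqref{eq:schrodinger}, together with the identity $\cE(\mathbf{E}') = \cJ(u')$. To apply the minimality of $u$, the remaining point is that $u'$ belongs to the admissible class appearing in Theorem~\ref{Th:2}, i.e.\ that $\int_{\R^3} (u')^2/r^2\,dx < +\infty$. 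This follows from the very construction $\mathbf{E}'(x) = \frac{u'(r,x_3)}{r}(-x_2,x_1,0)^{T}$: a short computation gives $|\mathbf{E}'|^2 = (u')^2$ and $|\curl \mathbf{E}'|^2 = |\nabla u'|^2 + (u')^2/r^2$ (with $\div\mathbf{E}' = 0$), so the finiteness of $\int_{\R^3}(u')^2/r^2\,dx$ is forced by $\mathbf{E}' \in H^1(\R^3;\R^3)$ — indeed it is already encoded in the definition of $\cE(\mathbf{E}')$ being finite.

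Combining these ingredients, for every weak solution $\mathbf{E}'$ of \eqref{eq:maxwell} of the form \eqref{eq:form} we obtain an admissible competitor $u'$ for the minimisation in Theorem~\ref{Th:2}, and hence
\[
\cE(\mathbf{E}) \;=\; \cJ(u) \;\leq\; \cJ(u') \;=\; \cE(\mathbf{E}'),
\]
which proves that $\mathbf{E}$ is the least energy solution among all solutions of the form \eqref{eq:form}. The only step that requires genuine care is the one above, namely ensuring that the finiteness condition $\int (u')^2/r^2\,dx <+\infty$ in the admissible class of Theorem~\ref{Th:2} is automatic for every $u'$ arising from a Maxwell solution of the form \eqref{eq:form} in $H^1(\R^3;\R^3)$; once this is checked through the identities for $|\mathbf{E}'|^2$ and $|\curl \mathbf{E}'|^2$, the result drops out as a formal consequence of Theorems~\ref{Th:1} and \ref{Th:2}.

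\end{altproof}
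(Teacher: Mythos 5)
Your proposal is correct and follows essentially the same route as the paper, whose entire proof of Theorem \ref{Th:3} is the one-line observation that it is a direct consequence of Theorems \ref{Th:1} and \ref{Th:2}; the point you single out as needing care --- that any $u'$ coming from a Maxwell solution of the form \eqref{eq:form} satisfies $\int_{\R^3} (u')^2/r^2\,dx < +\infty$ --- is already packaged inside Theorem \ref{Th:1} (via Lemma \ref{lem:4.1}: $\mathbf{E}'\in\cD$ iff $u'\in X$, and weak solutions of \eqref{eq:schrodinger} are by definition critical points of $\cJ$ on $X$). One small inaccuracy: your identity $|\curl \mathbf{E}'|^2 = |\nabla u'|^2 + (u')^2/r^2$ is not pointwise true (pointwise $|\curl \mathbf{E}'|^2 = (\partial_3 u')^2 + (\partial_r u' + u'/r)^2$, and it is $|\nabla \mathbf{E}'|^2$ that equals $|\nabla u'|^2 + (u')^2/r^2$); it holds only after integration, using $\div \mathbf{E}' = 0$ together with \eqref{eq:curl-grad}, which does not affect your conclusion.
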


\begin{Rem}
If we take $q = 2$ and $g \equiv 0$, statements of Theorem \ref{Th:2} and \ref{Th:3} still hold true under (V), (F1)--(F4) and proofs require only slight modifications.
\end{Rem}

\begin{Rem}
The sign-changing behaviour of the right hand side of the equation forces us to consider the positive definite case in the singular Schr\"odinger problem \eqref{eq:schrodinger} (see the assumption (V)). It would be interesting to find least energy solutions to a strongly indefinite problem with sign-changing and periodic (with respect to some of $x$ arguments) nonlinearity. Strongly indefinite problems with sign-changing nonlinearities has been studied eg. in \cite{GuZhou} with the nonlinear term in the $L^p$-space with respect to $x$, where the multiplicity of solutions has been estabilished. The case of a negative potential is physically motivated, since $V$ is of the form $V(x) = - \omega^2 \varepsilon(x)$.
\end{Rem}

In the Appendix we provide the sketch of the proof of the multiplicity of solutions to the Schr\"odinger equation \eqref{eq:schrodinger}. Theorem \ref{Th:1} allows us then to obtain the multiplicity for the curl-curl problem \eqref{eq:maxwell}, see Theorems \ref{Th:4} and \ref{Th:5} in the Appendix.

\section{Functional setting for the Schr\"odinger equation (\ref{eq:schrodinger})}

We introduce the space
$$
X := \left\{ u \in H^1 (\R^3) \ : \  u = u(r, x_3) \mbox{ is cylindrically symmetric and } \int_{\R^3} \frac{|u(r, x_3)|^2}{r^2} \, dx < +\infty \right\}
$$
endowed with the norm
$$
\| u\|_X^2 := \int_{\R^3} |\nabla u|^2 + \frac{u^2}{r^2} + u^2 \, dx, \mbox{ where } r = \sqrt{x_1^2 + x_2^2}.
$$
It is known that $(X, \| \cdot \|_X)$ is a Hilbert space. Note that (V) implies that the quadratic form
$$
X \ni u \mapsto Q(u) := \int_{\R^3} |\nabla u|^2 + \frac{u^2}{r^2} + V(r, x_3) u^2 \, dx \in \R
$$
induces a norm $\| u\|^2 := Q(u)$ on $X$ which is equivalent with $\| \cdot \|_X$. It is clear that the embedding $X \hookrightarrow H^1 (\R^3)$ is continuous and therefore Sobolev embeddings $X \hookrightarrow L^t (\R^3)$ for $t \in [2,6]$ are continuous.

We define the energy functional associated to \eqref{eq:schrodinger} $\cJ : X \rightarrow \R$ by the formula
\begin{equation}\label{eq:J}
\cJ(u) := \frac12 \int_{\R^3} |\nabla u|^2 + \frac{u^2}{r^2} + V(r, x_3) u^2 \, dx - \int_{\R^3} F(x,u) \, dx + \int_{\R^3} G(x,u) \, dx, \quad u \in X.
\end{equation}
One can easily check that under (F1), (G1), $\cJ$ is of $\cC^1$ class on $X$. We say that critical points of $\cJ$ are \textit{weak solutions} to \eqref{eq:schrodinger}. In our setting we may rewrite $\cJ$ in the following form
$$
\cJ(u) = \frac12 \|u\|^2 - \int_{\R^3} F(x,u) \, dx + \int_{\R^3} G(x,u) \, dx, \quad u \in X.
$$

The scalar product $\langle \cdot, \cdot \rangle$ on $X$ is given by
$$
\langle u, v \rangle := \int_{\R^3} \nabla u \cdot \nabla v + \frac{u v}{r^2} + V(r, x_3) u v \, dx,
$$
where $\cdot$ denotes the usual scalar product in $\R^3$. Hence
$$
\cJ'(u)(v) = \langle u, v \rangle - \int_{\R^3} f(x,u)v \, dx + \int_{\R^3} g(x,u)v \, dx, \quad u, v \in X.
$$
Since nontrivial weak solutions to \eqref{eq:schrodinger} are critical point of $\cJ$ it is clear that they lie in the so-called \textit{Nehari manifold}
$$
\cN := \{ u \in X \setminus \{0\} \ : \ \cJ'(u)(u) = 0 \},
$$
which is, under our assumptions, a topological manifold (not necessarily a differentiable manifold). Observe that $\cC_0^\infty (\R^3) \not\subset X$ and $\cJ'(u)(\varphi)$ is not necessarily finite for $u \in X$ and $\varphi \in \cC_0^\infty (\R^3)$. Hence we say that $u \in X$ is a weak solution to \eqref{eq:schrodinger} if $u$ is a critical point of $\cJ$.

Moreover, it is classical to check that $\cJ'$ is weak-to-weak* continuous, i.e. for any $(u_n) \subset X$ with $u_n \weakto u_0$ in $X$ and $v \in X$ there holds
$$
\cJ'(u_n)(v) \to \cJ'(u_0)(v).
$$
Hence, if $(u_n) \subset X$ is a sequence with $\cJ'(u_n) \to 0$ in $X^*$, then any weak limit point of $(u_n)$ is a critical point of $\cJ$.

\section{Functional setting for the Maxwell equation (\ref{eq:maxwell})}

We introduce the energy functional $\cE : H^1 (\R^3; \R^3) \rightarrow \R$ associated with \eqref{eq:maxwell}
\begin{equation}\label{eq:E}
\cE (\mathbf{E}) = \frac12 \int_{\R^3} |\curl \mathbf{E}|^2 \, dx + \frac12 \int_{\R^3} V(x) |\mathbf{E}|^2 \, dx - \int_{\R^3} H(x, \mathbf{E}) \, dx,
\end{equation}
where 
$$
H(x, \mathbf{E}) := \int_0^1 h(x, t \mathbf{E}) \cdot \mathbf{E}  \, dt,
$$
$h$ is given by \eqref{eq:h} and $\cdot$ denotes the usual scalar product in $\R^3$. $\cE$ is of $\cC^1$ class on $H^1 (\R^3; \R^3)$ and we say that its critical points are \textit{weak solutions} to \eqref{eq:maxwell}. We observe also that for any $\mathbf{E} \in H^1 (\R^3; \R^3)$ there holds
\begin{equation}\label{eq:curl-grad}
\int_{\R^3} |\curl \mathbf{E}|^2 + |\div \mathbf{E}|^2 \, dx = \int_{\R^3} |\nabla \mathbf{E}|^2 \, dx,
\end{equation}
where $\div\mathbf{E}$ and $\nabla \mathbf{E}$ denotes the distributional divergence and the distributional gradient, respectively. Consider the action of the group 
$$
SO := SO(2) \times \{I\} = \left\{ \left( \begin{array}{ccc}
\cos \alpha & \sin \alpha & 0 \\
-\sin \alpha & \cos \alpha & 0 \\
0 & 0 & 1
\end{array} \right) \ : \ \alpha \in [0, 2\pi) \right\}
$$ 
on $\R^3$. Introduce the set
$$
\cD := \left\{ \mathbf{E} \in H^1 (\R^3; \R^3) \ : \ \mathbf{E} \mbox{ is of the form } \eqref{eq:form} \mbox{ for some } SO\mbox{-invariant } u : \R^3 \rightarrow \R \right\}.
$$
Obviously $\cD$ is closed in $H^1 (\R^3; \R^3)$ and $\cD \subset H^1_{SO-\mathrm{equiv}} (\R^3; \R^3)$, where
$$
H^1_{SO-\mathrm{equiv}} (\R^3; \R^3) := \left\{ \mathbf{E} \in H^1 (\R^3; \R^3) \ : \ \mathbf{E} \mbox{ is } SO\mbox{-equivariant} \right\}.
$$
We note the following density result.

\begin{Lem}
Let $\mathbf{E} \in H^1_{SO-\mathrm{equiv}} (\R^3; \R^3)$. Then there exists a sequence $(\mathbf{E}_n) \subset \cC_0^\infty (\R^3; \R^3) \cap H^1_{SO-\mathrm{equiv}} (\R^3; \R^3)$ such that $\| \mathbf{E}_n - \mathbf{E} \|_{H^1 (\R^3; \R^3)} \to 0$.
\end{Lem}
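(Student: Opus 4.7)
The plan is a classical symmetrization argument. First, by the standard density of test functions in the Sobolev space, I would pick a sequence $(\Psi_n) \subset \cC_0^\infty(\R^3;\R^3)$ such that $\Psi_n \to \mathbf{E}$ in $H^1(\R^3;\R^3)$. These approximants are not a priori $SO$-equivariant, so one averages them over the compact group $SO \cong S^1$ using its normalized Haar measure $\mu$.

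Define
\begin{equation*}
\mathbf{E}_n(x) := \int_{SO} g\,\Psi_n(g^{-1}x)\, d\mu(g), \qquad x \in \R^3.
\end{equation*}
Since each element of $SO$ is an orthogonal isometry of $\R^3$, differentiation under the integral sign yields $\mathbf{E}_n \in \cC^\infty(\R^3;\R^3)$. If $\supp \Psi_n \subset B(0,R)$, then $\Psi_n(g^{-1}x) \neq 0$ forces $|x| = |g^{-1}x| \leq R$, hence $\supp \mathbf{E}_n \subset \ov{B(0,R)}$ and $\mathbf{E}_n \in \cC_0^\infty(\R^3;\R^3)$. The $SO$-equivariance follows from the left-invariance of $\mu$: for any $h \in SO$, the substitution $k = h^{-1}g$ gives
\begin{equation*}
\mathbf{E}_n(hx) = \int_{SO} g\,\Psi_n(g^{-1}hx)\, d\mu(g) = h \int_{SO} k\,\Psi_n(k^{-1}x)\, d\mu(k) = h\mathbf{E}_n(x).
\end{equation*}

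Finally, I would prove the convergence $\mathbf{E}_n \to \mathbf{E}$ in $H^1$ by exploiting the fact that $\mathbf{E}$ itself is already $SO$-equivariant. From $\mathbf{E}(g^{-1}x) = g^{-1}\mathbf{E}(x)$ one gets $g\,\mathbf{E}(g^{-1}x) = \mathbf{E}(x)$ pointwise, and integrating over $SO$ yields
\begin{equation*}
\mathbf{E}(x) = \int_{SO} g\,\mathbf{E}(g^{-1}x)\, d\mu(g),
\end{equation*}
so that
\begin{equation*}
\mathbf{E}_n(x) - \mathbf{E}(x) = \int_{SO} g\,\bigl(\Psi_n - \mathbf{E}\bigr)(g^{-1}x)\, d\mu(g).
\end{equation*}
For each fixed $g \in SO$, the map $v \mapsto g\,v(g^{-1}\cdot)$ is an isometry of $H^1(\R^3;\R^3)$: the chain rule and the orthogonality of $g$ preserve both the $L^2$-norm of the vector field and of its full gradient. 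Hence Minkowski's integral inequality gives
\begin{equation*}
\|\mathbf{E}_n - \mathbf{E}\|_{H^1(\R^3;\R^3)} \leq \int_{SO} \|\Psi_n - \mathbf{E}\|_{H^1(\R^3;\R^3)}\, d\mu(g) = \|\Psi_n - \mathbf{E}\|_{H^1(\R^3;\R^3)} \to 0.
\end{equation*}

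No step presents a genuine obstacle. The only point demanding care is verifying that the Haar average of a smooth, compactly supported map stays in $\cC_0^\infty(\R^3;\R^3)$, which is exactly why the compactness of both $\supp \Psi_n$ and of the group $SO$ is essential; everything else reduces to the invariance of the Haar measure and the fact that the $SO$-action on $H^1(\R^3;\R^3)$ is by isometries.
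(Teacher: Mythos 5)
Your proposal is correct and follows essentially the same route as the paper: approximate $\mathbf{E}$ by arbitrary test functions and average over $SO$ with its normalized Haar measure (your $\int_{SO} g\,\Psi_n(g^{-1}x)\,d\mu(g)$ agrees with the paper's $\int_{SO} g^{-1}\mathbf{U}_n(gx)\,d\mu(g)$ by inversion-invariance of $\mu$), with the same verifications of smoothness, compact support, and equivariance. The only difference is cosmetic: you estimate $\|\mathbf{E}_n-\mathbf{E}\|_{H^1}$ directly via Minkowski's integral inequality and the fact that each $v\mapsto g\,v(g^{-1}\cdot)$ is an $H^1$-isometry, whereas the paper first shows $(\mathbf{E}_n)$ is Cauchy in $H^1$ and then identifies the limit through the analogous $L^2$ estimate --- your version is a slight streamlining of the same argument.
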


\begin{proof}
Fix $\mathbf{E} \in H^1_{SO-\mathrm{equiv}} (\R^3; \R^3) \subset H^1 (\R^3; \R^3)$. Obviously, there is a sequence $(\mathbf{U}_n) \subset \cC_0^\infty (\R^3; \R^3)$ such that $\| \mathbf{U}_n - \mathbf{E} \|_{H^1 (\R^3; \R^3)} \to 0$. Put
$$
\mathbf{E}_n (x) := \int_{SO} g^{-1} \mathbf{U}_n(gx) \, d \mu (g),
$$
where $\mu$ is the (probabilistic) Haar measure on a compact group $SO$, in particular $\mu(SO)=1$. Obviously $\mathbf{E}_n \in \cC_0^\infty (\R^3; \R^3)$. Moreover, for any $e \in SO$
$$
\mathbf{E}_n (ex) = \int_{SO} g^{-1} \mathbf{U}_n(gex) \, d \mu(g) = \int_{SO} \left( \wt{g} e^{-1} \right)^{-1} \mathbf{U}_n( \wt{g} x) \, d \mu(\wt{g}) = e \int_{SO} \wt{g}^{-1} \mathbf{U}_n(\wt{g}x) \, d \mu (\wt{g}) = e \mathbf{E}_n (x)
$$
and $\mathbf{E}_n \in \cC_0^\infty (\R^3; \R^3) \cap H^1_{SO-\mathrm{equiv}} (\R^3; \R^3)$. For any $m,n$ we compute
\begin{align*}
| \mathbf{E}_n - \mathbf{E}_m |_2^2 &= \int_{\R^3} \left| \int_{SO} g^{-1} (\mathbf{U}_n(gx) - \mathbf{U}_m(gx) ) \, d\mu(g) \right|^2 \, dx \\
&\leq \int_{\R^3}  \int_{SO} \left| g^{-1} (\mathbf{U}_n(gx) - \mathbf{U}_m(gx) ) \right|^2 \, d\mu(g)  \, dx \\
&= \int_{SO} \int_{\R^3}   \left| g^{T} ( \mathbf{U}_n(gx) -  \mathbf{U}_m(gx) ) \right|^2  \, dx \, d\mu(g) \\
&= \int_{SO} \int_{\R^3}   \left|  \mathbf{U}_n(gx) -  \mathbf{U}_m(gx)  \right|^2  \, dx \, d\mu(g) = \int_{SO} | \mathbf{U}_n -  \mathbf{U}_m |_2^2 \, d \mu(g) = | \mathbf{U}_n -  \mathbf{U}_m |_2^2,
\end{align*}
since $\det (g) = 1$ and $\mu(SO) = 1$. Similarly
$$
| \nabla \left( \mathbf{E}_n - \mathbf{E}_m \right) |_2^2 \leq | \nabla \left( \mathbf{U}_n - \mathbf{U}_m \right) |_2^2.
$$
Fix $\varepsilon > 0$. Then, for sufficiently large $m, n$
\begin{align*}
\| \mathbf{E}_n - \mathbf{E}_m \|_{H^1 (\R^3; \R^3)}^2 = | \nabla \left( \mathbf{E}_n - \mathbf{E}_m \right) |_2^2 + | \mathbf{E}_n - \mathbf{E}_m |_2^2 \leq | \nabla \left( \mathbf{U}_n - \mathbf{U}_m \right) |_2^2 + | \mathbf{U}_n - \mathbf{U}_m |_2^2 < \varepsilon^2,
\end{align*}
since $(\mathbf{U}_n)$ is a Cauchy sequence in $H^1 (\R^3; \R^3)$. Hence $(\mathbf{E}_n)$ is also a Cauchy sequence in $H^1 (\R^3; \R^3)$, so it is convergent and it is sufficient to show that its limit is $\mathbf{E}$. Obviously, due to the continuous Sobolev embedding $H^1 (\R^3; \R^3) \subset L^2 (\R^3; \R^3)$, it is enough to show that $\mathbf{E}_n \to \mathbf{E}$ in $L^2 (\R^3; \R^3)$. We perform the computatons similarly as above:
\begin{align*}
| \mathbf{E}_n - \mathbf{E} |_2^2 &= \int_{\R^3} \left| \int_{SO} g^{-1} (\mathbf{U}_n(gx) - g\mathbf{E}(x) ) \, d\mu(g) \right|^2 \, dx \\
&\leq \int_{\R^3}  \int_{SO} \left| g^{-1} (\mathbf{U}_n(gx) - \mathbf{E}(gx) ) \right|^2 \, d\mu(g)  \, dx \\
&= \int_{SO} \int_{\R^3}   \left| g^{-1} ( \mathbf{U}_n(gx) -  \mathbf{E}(gx) ) \right|^2  \, dx \, d\mu(g) \\
&= \int_{SO} \int_{\R^3}   \left|  \mathbf{U}_n(gx) -  \mathbf{E}(gx)  \right|^2  \, dx \, d\mu(g) = \int_{SO} | \mathbf{U}_n -  \mathbf{E} |_2^2 \, d \mu(g) = | \mathbf{U}_n -  \mathbf{E} |_2^2 \to 0.
\end{align*}
\end{proof}

Let
$$
\cH := \left\{ \mathbf{E} \in L^2 (\R^3; \R^3) \ : \ |\mathbf{E}(x)| \leq C r \mbox{ for some } C > 0 \mbox{ uniformly w.r. to } x_3 \mbox{ as } r \to 0^+ \right\},
$$
where $r = \sqrt{x_1^2 + x_2^2}$, and
$$
\R^3_* := \R^3 \setminus ( \{0\} \times \{0\} \times \R).
$$
We obtain the following characterization of $\cD$.

\begin{Lem}\label{lem:D}
There holds 
$$
\cD = \overline{ \cC_0 (\R^3; \R^3) \cap \cC^\infty (\R^3_*; \R^3) \cap \cH \cap \cD },
$$
where the closure is taken with respect to the $H^1 (\R^3; \R^3)$ norm.
\end{Lem}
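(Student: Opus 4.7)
The inclusion ``$\supseteq$'' is immediate since $\cD$ is closed in $H^1(\R^3;\R^3)$. For the opposite inclusion the plan is to exploit the fact that the ansatz \eqref{eq:form} sets up an isometric identification between $\cD$ and $X$: a direct computation in Cartesian coordinates (setting $v := u/r$ and using $u_r = v + r v_r$) gives the pointwise identities
\begin{equation*}
|\mathbf{E}(x)|^2 = u(r, x_3)^2, \qquad |\nabla \mathbf{E}(x)|^2 = |\nabla u(r, x_3)|^2 + \frac{u(r, x_3)^2}{r^2},
\end{equation*}
so that $\|\mathbf{E}\|_{H^1(\R^3;\R^3)} = \|u\|_X$. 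Consequently the lemma will follow once we prove that
\begin{equation*}
\cX_0 := \{v \in \cC_0^\infty(\R^3_*) : v \text{ is cylindrically symmetric}\}
\end{equation*}
is dense in $X$, because then any approximating sequence $u_n \in \cX_0$ of $u$ in $X$ yields, via \eqref{eq:form}, an $\mathbf{E}_n$ which is $\cC^\infty$ on $\R^3_*$, vanishes in a neighbourhood of the $x_3$-axis (hence is continuous on $\R^3$, has compact support, and trivially satisfies $|\mathbf{E}_n(x)| \leq Cr$, so $\mathbf{E}_n \in \cH$), lies in $\cD$, and converges to $\mathbf{E}$ in $H^1(\R^3;\R^3)$.

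The density of $\cX_0$ in $X$ I would obtain in three steps. First, I remove the axis: fix a smooth cutoff $\eta_\eps : [0, +\infty) \to [0,1]$ with $\eta_\eps = 0$ on $[0, \eps]$, $\eta_\eps = 1$ on $[2\eps, +\infty)$ and $|\eta_\eps'| \leq C/\eps$; then $\eta_\eps u \to u$ in $X$ as $\eps \to 0^+$, the delicate point being
\begin{equation*}
\int_{\R^3} u^2 |\nabla \eta_\eps|^2 \, dx \leq C \int_{\{\eps < r < 2\eps\}} \frac{u^2}{r^2} \, dx \longrightarrow 0,
\end{equation*}
which follows by dominated convergence from the defining condition $\int_{\R^3} u^2/r^2 \, dx < +\infty$ of $X$. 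Second, I truncate by multiplication with a radial cutoff $\chi_R \in \cC_0^\infty(\R^3)$ equal to $1$ on $B(0, R)$; since the support is already contained in $\{r \geq \eps\}$ where the weight $1/r^2$ is bounded, $H^1$-convergence as $R \to +\infty$ implies $X$-convergence. Third, I mollify with a radially symmetric (in $\R^3$) kernel $\rho_\delta$ of radius $\delta < \eps/2$; this preserves cylindrical symmetry, keeps the support inside $\R^3_*$, and produces a smooth approximation in $\cX_0$.

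The main obstacle is the first step. Standard Sobolev density arguments give no control on the weighted integral $\int u^2/r^2 \, dx$, so one has to use precisely the hypothesis $u \in X$ to see that the apparently singular contribution $u \nabla \eta_\eps$ vanishes in $L^2$ as $\eps \to 0^+$: the blow-up $|\nabla \eta_\eps|^2 \leq C/\eps^2$ is compensated exactly by the factor $r^2 \leq 4 \eps^2$ in the thin annulus $\{\eps < r < 2\eps\}$ where $\nabla \eta_\eps$ is supported. Once $u$ is supported away from the axis the remaining truncation and mollification are routine, and cylindrical symmetry is preserved throughout by working with cutoffs depending only on $(r, x_3)$ and with a rotationally symmetric mollifier, a simpler substitute for the $SO$-averaging construction from the previous lemma.
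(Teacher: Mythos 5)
Your overall strategy --- reduce the lemma to the density of cylindrically symmetric $\cC_0^\infty(\R^3_*)$ functions in $X$ and transfer back through the ansatz \eqref{eq:form} --- is genuinely different from the paper's and the density part itself is correct: the estimate $\int_{\R^3} u^2|\nabla\eta_\eps|^2\,dx \le C\int_{\{\eps<r<2\eps\}} u^2/r^2\,dx \to 0$ is exactly the right use of the weighted integrability, the truncation and the rotation-invariant mollification are routine once the support is away from the axis, and cylindrical symmetry is preserved throughout. But there is a genuine gap at the very first step. You assert that \eqref{eq:form} ``sets up an isometric identification between $\cD$ and $X$'' and verify it by a pointwise computation with $v=u/r$. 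That computation is legitimate only where $u$ is (weakly) differentiable, whereas membership in $\cD$ only says $\mathbf{E}\in H^1(\R^3;\R^3)$ and that \eqref{eq:form} holds for some $SO$-invariant $u$ which is, a priori, merely measurable. Establishing that this $u$ lies in $X$ \emph{is} the nontrivial direction of the correspondence: in the paper it is the content of Lemma \ref{lem:4.1}, which is proved \emph{after} Lemma \ref{lem:D} and uses the approximating sequence that Lemma \ref{lem:D} provides. So you cannot invoke the identification wholesale without circularity; you must prove $\mathbf{E}\in\cD \Rightarrow u\in X$ independently. This is repairable: with $w(x):=(-x_2,x_1,0)/r$, which is smooth on $\R^3_*$ with $|w|=1$ and $|\nabla w|=1/r$, one has $u=\mathbf{E}\cdot w\in H^1_\loc(\R^3_*)$, the product rule gives $\nabla\mathbf{E}=w\otimes\nabla u+u\,\nabla w$ a.e.\ on $\R^3_*$, and the cross term vanishes because $\nabla(|w|^2)=0$, so $|\nabla\mathbf{E}|^2=|\nabla u|^2+u^2/r^2$ a.e.\ on $\R^3_*$; since the axis is Lebesgue-null this yields $\int_{\R^3}\bigl(|\nabla u|^2+u^2/r^2\bigr)dx\le |\nabla\mathbf{E}|_2^2<\infty$ with $\nabla u$ the gradient on $\R^3_*$. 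One must then still check that $u\in H^1(\R^3)$, i.e.\ that the distributional gradient across the axis carries no singular part; this follows from $\int u^2/r^2\,dx<\infty$ by your own cutoff trick (test $\int u\,\partial_i\varphi\,dx$ against $\eta_\eps\varphi$ and estimate $\int u\,\varphi\,\partial_i\eta_\eps\,dx$ exactly as in your first step), or by the vanishing $H^1$-capacity of a line in $\R^3$. Only with this supplement is $u\mapsto\mathbf{E}$ a surjective isometry of $X$ onto $\cD$ and your reduction legitimate.

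For comparison, the paper works entirely on the vector side: it first produces smooth $SO$-equivariant approximations of $\mathbf{E}$ by Haar averaging, then projects them pointwise onto the tangential direction $\mathbb{V}_\tau(x)$, citing \cite{Azzolini} and \cite{GMS} for the convergence $\mathbf{P}_{\tau,n}\to\mathbf{E}$ in $\cD^{1,2}(\R^3;\R^3)$, verifies the $L^2$-convergence by hand using that $\mathbf{E}$ is purely tangential (so the radial discrepancy terms vanish for the limit), and obtains membership in $\cH$ from a Taylor expansion of $\mathbf{E}_n-\mathbf{P}_{\zeta,n}$ at the axis. Your scalar-side route avoids the projection machinery and the external references, and even yields stronger approximants, namely fields in $\cC_0^\infty(\R^3;\R^3)$ vanishing in a neighbourhood of the axis. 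What the paper's route buys is precisely that it never needs to know beforehand that $u\in X$ --- which is why Lemma \ref{lem:4.1} can afterwards be deduced from Lemma \ref{lem:D}, while your order of deduction is reversed and must therefore shoulder the identification argument itself.
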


\begin{proof}
Put $A := \overline{ \cC_0 (\R^3; \R^3) \cap \cC^\infty (\R^3_*; \R^3) \cap \cH \cap \cD }$. Since $\cD$ is closed in $H^1 (\R^3; \R^3)$ it is clear that
$$
A \subset \overline{\cD} = \cD.
$$
Fix $\mathbf{E} \in \cD$. Let $(\mathbf{E}_n) \subset \cC_0^\infty (\R^3; \R^3) \cap H^1_{SO-\mathrm{equiv}} (\R^3; \R^3)$ be a sequence with $\| \mathbf{E}_n - \mathbf{E} \|_{H^1 (\R^3; \R^3)} \to 0$. We will write $\mathbf{E}_n = (\mathbf{E}_n^1, \mathbf{E}_n^2, \mathbf{E}_n^3)$. For $x \in \R^3_*$ define
\begin{align*}
&\mathbb{V}_\rho (x) := \spann \{ (x_1, x_2, 0) \}, \\
&\mathbb{V}_\tau (x) := \spann \{ (-x_2, x_1, 0) \}, \\
&\mathbb{V}_\zeta (x) := \spann \{ (0,0,1) \}.
\end{align*}
Note that $\mathbb{V}_\rho(x), \mathbb{V}_\tau (x), \mathbb{V}_\zeta (x)$ are one-dimensional subspaces of $\R^3$ and
$$
\R^3 = \mathbb{V}_\rho(x) \oplus \mathbb{V}_\tau (x) \oplus \mathbb{V}_\zeta (x)
$$
for any $x \in \R^3_*$.
Let $\cD^{1,2} (\R^3; \R^3)$ denote the completion of $\cC_0^\infty (\R^3; \R^3)$ with respect to the norm $\|\mathbf{E}\|_\nabla := |\nabla \mathbf{E}|_2$. Then, as in \cite[Lemma 1]{Azzolini} and \cite[Proposition 2.3]{GMS}, for any $n \geq 1$ there are $SO$-equivariant functions $\mathbf{P}_{\rho,n}, \mathbf{P}_{\tau, n}, \mathbf{P}_{\zeta, n} \in \cD^{1,2} (\R^3; \R^3)$ such that for $x \in \R^3_*$
\begin{itemize}
\item[(i)] $\mathbf{P}_{\rho, n} (x)$ is the projection of $\mathbf{E}_n (x)$ onto $\mathbb{V}_\rho (x)$,
\item[(ii)] $\mathbf{P}_{\tau, n} (x)$ is the projection of $\mathbf{E}_n (x)$ onto $\mathbb{V}_\tau (x)$,
\item[(iii)] $\mathbf{P}_{\zeta, n} (x) := (0, 0, \mathbf{E}_n^3 (x) )$ is the projection of $\mathbf{E}_n(x)$ onto $\mathbb{V}_\zeta (x)$.
\end{itemize}
Hence $\mathbf{P}_{\rho,n}, \mathbf{P}_{\tau, n}, \mathbf{P}_{\zeta, n} \in  \cC^\infty (\R^3_*; \R^3)$ and they vanish outside a sufficiently large ball in $\R^3$. In fact, $\mathbf{P}_{\zeta, n} \in \cC_0^\infty (\R^3; \R^3)$. Moreover
$$
\mathbf{E}_n (x) = \mathbf{P}_{\rho, n}(x) + \mathbf{P}_{\tau, n} (x) + \mathbf{P}_{\zeta, n} (x) \mbox{ for every } x \in \R^3_*.
$$

We notice that, as in the proof of \cite[Proposition 2.3]{GMS}, $\mathbf{P}_{\tau, n} \to \mathbf{E}$ in $\cD^{1,2} (\R^3; \R^3)$. Hence it is sufficient to show that $\mathbf{P}_{\tau, n} \in L^2 (\R^3; \R^3)$, $\mathbf{P}_{\tau,n} \to \mathbf{E}$ in $L^2 (\R^3; \R^3)$ and that $\mathbf{P}_{\tau, n} \in \cC_0 (\R^3; \R^3) \cap \cH$.

Observe that $\mathbf{E}_n^1 = \mathbf{E}_n^2 \equiv 0$ on $\{0\}\times \{0\} \times \R$, since $\mathbf{E}_n$ is $SO$-invariant. Note that for any $x \in \R^3_*$ we can write down exact formulas for $\mathbf{P}_{\rho, n}$ and $\mathbf{P}_{\tau, n}$, i.e.
$$
\mathbf{P}_{\rho, n} (x) = \frac{\mathbf{E}_n(x) \cdot (x_1, x_2, 0)}{r^2} \left( \begin{array}{c}
x_1 \\ x_2 \\ 0
\end{array}  \right)
$$
and
\begin{equation}\label{projection}
\mathbf{P}_{\tau, n} (x) = \frac{\mathbf{E}_n(x) \cdot (-x_2, x_1, 0)}{r^2} \left( \begin{array}{c}
-x_2 \\ x_1 \\ 0
\end{array}  \right).
\end{equation}
From the uniform continuity of $\mathbf{E}_n$ we see that
$$
\lim_{ (x_1, x_2) \to (0,0)} \mathbf{P}_{\rho,n}(x) = \lim_{ (x_1, x_2) \to (0,0)} \mathbf{P}_{\tau,n}(x) = 0
$$
uniformly with respect to $x_3$. Hence we can extend $\mathbf{P}_{\rho, n}$ and $\mathbf{P}_{\tau, n}$ continuously onto $\R^3$ and write that
$$
\mathbf{E}_n (x) = \mathbf{P}_{\rho, n}(x) + \mathbf{P}_{\tau, n} (x) + \mathbf{P}_{\zeta, n} (x) \mbox{ for every } x \in \R^3.
$$
In particular $\mathbf{P}_{\rho, n}, \mathbf{P}_{\tau, n} \in \cC_0 (\R^3; \R^3)$ and $\mathbf{P}_{\tau, n} \in L^2 (\R^3; \R^3)$. 

Moreover
\begin{align*}
| \mathbf{P}_{\tau, n} - \mathbf{E} |_2 \leq | \mathbf{P}_{\tau, n} - \mathbf{E}_n|_2 + |\mathbf{E}_n - \mathbf{E}|_2 = | \mathbf{P}_{\tau, n} - \mathbf{E}_n|_2 + o(1)
\end{align*}
and, recalling \eqref{projection},
\begin{align*}
| \mathbf{P}^1_{\tau, n} - \mathbf{E}^1_n|_2^2 &= \int_{\R^3} \left| \frac{\mathbf{E}_n(x) \cdot (-x_2, x_1, 0)}{r^2} (-x_2) - \mathbf{E}^1_n (x) \right|^2 \, dx \\
&= \int_{\R^3} \left| \frac{ x_2^2 \mathbf{E}_n^1 (x)  - x_1 x_2 \mathbf{E}_n^2 (x) - x_1^2 \mathbf{E}_n^1(x)  - x_2^2 \mathbf{E}_n^1(x) }{x_1^2 + x_2^2} \right|^2 \, dx \\
&= \int_{\R^3} \left| \frac{x_1 x_2 \mathbf{E}_n^2 (x) + x_1^2 \mathbf{E}_n^1 (x) }{x_1^2 +  x_2^2} \right|^2 \, dx.
\end{align*}
Direct calculations show that, taking into account \eqref{eq:form},
$$
\frac{x_1 x_2 \mathbf{E}^2 (x) + x_1^2 \mathbf{E}^1 (x) }{x_1^2 +  x_2^2} = 0\mbox{ for a.e. } x \in \R^3_*.
$$
Hence
\begin{align*}
| \mathbf{P}^1_{\tau, n} - \mathbf{E}^1_n|_2 &= \left( \int_{\R^3} \left| \frac{x_1 x_2 \mathbf{E}_n^2 (x) + x_1^2 \mathbf{E}_n^1 (x) }{x_1^2 +  x_2^2} - \frac{x_1 x_2 \mathbf{E}^2 (x) + x_1^2 \mathbf{E}^1 (x) }{x_1^2 +  x_2^2} \right|^2 \, dx \right)^{1/2} \\
&= \left( \int_{\R^3} \left| \frac{x_1 x_2 ( \mathbf{E}_n^2 (x) - \mathbf{E}^2(x) ) + x_1^2 (\mathbf{E}_n^1 (x) - \mathbf{E}^1 (x) )}{x_1^2 + x_2^2} \right|^2 \, dx \right)^{1/2} \\
&\leq \left(  \int_{\R^3} \left| \frac{x_1 x_2 ( \mathbf{E}_n^2 (x) - \mathbf{E}^2(x) ) }{x_1^2+x_2^2} \right|^2 \, dx \right)^{1/2} + \left( \int_{\R^3} \left| \frac{x_1^2 (\mathbf{E}_n^1 (x) - \mathbf{E}^1 (x) )}{x_1^2 + x_2^2} \right|^2 \, dx \right)^{1/2} \\
&=  \left(  \int_{\R^3} \left| \frac{x_1 x_2 }{x_1^2+x_2^2} \right|^2 |\mathbf{E}_n^2 (x) - \mathbf{E}^2(x)|^2 \, dx \right)^{1/2} + \left( \int_{\R^3} \left| \frac{x_1^2}{x_1^2 + x_2^2} \right|^2 | \mathbf{E}_n^1 (x) - \mathbf{E}^1 (x) |^2 \, dx \right)^{1/2} \\
&\leq \frac12 | \mathbf{E}_n^2 - \mathbf{E}^2 |_2 + | \mathbf{E}_n^1 - \mathbf{E}^1 |_2 \to 0.
\end{align*}
Similarly
$$
| \mathbf{P}^2_{\tau, n} - \mathbf{E}^2_n|_2 \to 0, \quad | \mathbf{P}^3_{\tau, n} - \mathbf{E}^3_n|_2 \to 0.
$$
Hence $\mathbf{P}_{\tau, n} \to \mathbf{E}$ in $L^2 (\R^3; \R^3)$.

To show that $\mathbf{P}_{\tau, n} \in \cH$ we note that $\mathbf{E}_n - \mathbf{P}_{\zeta, n} \in \cC_0^\infty (\R^3; \R^3)$. Then, by Taylor-series expansion
\begin{align*}
(\mathbf{P}_{\rho,n} + \mathbf{P}_{\tau, n})(x) &= (\mathbf{E}_n - \mathbf{P}_{\zeta, n}) (x) \\
&= \underbrace{(\mathbf{E}_n - \mathbf{P}_{\zeta, n}) (0,0, x_3)}_{=0} + \nabla (\mathbf{E}_n - \mathbf{P}_{\zeta, n}) (0,0,x_3) \left( \begin{array}{c}
x_1 \\ x_2 \\ 0
\end{array} \right) + o (|(x_1, x_2)|) \\
&=  \nabla (\mathbf{E}_n - \mathbf{P}_{\zeta, n}) (0,0,x_3) \left( \begin{array}{c}
x_1 \\ x_2 \\ 0
\end{array} \right) + o (|(x_1, x_2)|)\mbox{ as } |(x_1, x_2)| \to 0^+.
\end{align*}
Hence $\mathbf{P}_{\rho,n} + \mathbf{P}_{\tau, n} \in \cH$. Moreover
$$
| \mathbf{P}_{\tau, n}  | \leq |\mathbf{P}_{\rho,n} + \mathbf{P}_{\tau, n} |,
$$
hence $\mathbf{P}_{\tau, n} \in \cH$ and therefore $\mathbf{E} \in A$, and the proof is completed.
\end{proof}

\section{The equivalence result}

We shall begin with the equivalence of functionals $\cE$ and $\cJ$ for vector fields of the form \eqref{eq:form}.

\begin{Lem}\label{lem:4.1}
$\mathbf{E} \in \cD$ if and only if $u \in X$. Moreover $\cJ(u) = \cE (\mathbf{E})$ and $\div(\mathbf{E}) = 0$.
\end{Lem}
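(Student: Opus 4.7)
The plan is to reduce the claim to an explicit pointwise computation on smooth vector fields and then extend by density, using Lemma~3.2 in one direction and density of nice cylindrically symmetric functions in $X$ in the other.

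The core calculation is carried out first on a convenient dense subspace. For $u \in \cC_0^\infty(\R^3)$ cylindrically symmetric with $\supp u \subset \R^3_*$, the field $\mathbf{E}$ defined by \eqref{eq:form} is smooth and compactly supported in $\R^3_*$, and direct differentiation in Cartesian coordinates (using $u=u(r,x_3)$) yields $|\mathbf{E}|^2 = u^2$, $\div \mathbf{E} = 0$ and
$$
\curl \mathbf{E} = \left(-\frac{x_1}{r}\,\partial_{x_3} u,\; -\frac{x_2}{r}\,\partial_{x_3} u,\; \partial_r u + \frac{u}{r}\right),
$$
so that $|\curl \mathbf{E}|^2 = (\partial_{x_3}u)^2 + (\partial_r u)^2 + u^2/r^2 + 2u\,\partial_r u/r$. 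Passing to cylindrical coordinates, the mixed term integrates to $2\pi \int_\R \int_0^\infty \partial_r(u^2)\, dr\, dx_3 = 0$, and \eqref{eq:curl-grad} then gives
$$
\|\mathbf{E}\|_{H^1(\R^3;\R^3)}^2 = \int_{\R^3} |\nabla u|^2 + \frac{u^2}{r^2} + u^2 \, dx = \|u\|_X^2.
$$

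For the implication $\mathbf{E} \in \cD \Rightarrow u \in X$, I would invoke Lemma~3.2 to produce a sequence $(\mathbf{E}_n) \subset \cC_0(\R^3;\R^3) \cap \cC^\infty(\R^3_*;\R^3) \cap \cH \cap \cD$ with $\mathbf{E}_n \to \mathbf{E}$ in $H^1(\R^3;\R^3)$. The recovery formula $u_n = (x_1\mathbf{E}_n^2 - x_2 \mathbf{E}_n^1)/r$ defines a cylindrically symmetric $u_n \in \cC^\infty(\R^3_*)$; the condition $\mathbf{E}_n \in \cH$ forces $u_n = O(r)$ near the axis, so the core identity applies to $u_n \chi_\eps(r)$ for a radial cutoff $\chi_\eps$ vanishing on $\{r<\eps\}$, and sending $\eps\downarrow 0$ yields $\|\mathbf{E}_n\|_{H^1}^2 = \|u_n\|_X^2$. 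Hence $(u_n)$ is Cauchy in $X$ and its limit equals $u$ by pointwise convergence along a subsequence. For the reverse implication, density of $\cC_0^\infty(\R^3)$-functions that are cylindrically symmetric and supported away from the axis in $X$ (a cutoff plus mollification that preserves cylindrical symmetry, using $\int u^2/r^2\,dx < \infty$ to control the error near the axis) lets one approximate any $u \in X$ by such $u_n$; the corresponding $\mathbf{E}_n$ are then Cauchy in $H^1$ by the core isometry, the limit lies in the closed set $\cD$, and it agrees pointwise a.e.\ with \eqref{eq:form}.

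The equality $\cJ(u) = \cE(\mathbf{E})$ follows once the quadratic parts match, because writing $\mathbf{E} = u w$ with $|w|=1$ and substituting $s = t u(x)$ gives, for $u(x) \neq 0$,
$$
H(x, \mathbf{E}(x)) = \int_0^1 u(x)\bigl(f(x,tu(x)) - g(x, tu(x))\bigr)\, dt = F(x, u(x)) - G(x, u(x)),
$$
with both sides vanishing where $u=0$. The identity $\div \mathbf{E} = 0$ persists in the distributional limit of $\div \mathbf{E}_n = 0$. The main obstacle throughout is the axis $\{r=0\}$, where \eqref{eq:form} is formally singular and the pointwise formulas for $\curl \mathbf{E}$ and $\div \mathbf{E}$ break down; the two density results are tailored precisely to reduce every estimate to smooth fields supported in $\R^3_*$, where the calculation is elementary.
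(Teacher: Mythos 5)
Your proposal is correct. For the implication $\mathbf{E}\in\cD\Rightarrow u\in X$ you follow the paper essentially verbatim: approximation via Lemma~\ref{lem:D}, the bound $|u_n|=|\mathbf{E}_n|\leq Cr$ near the axis coming from membership in $\cH$ (your recovery formula $u_n=(x_1\mathbf{E}_n^2-x_2\mathbf{E}_n^1)/r$ is right), a Cauchy argument in $X$, identification of the limit with $u$ through $L^2$-convergence, and passing $\div\mathbf{E}_n=0$ to the distributional limit. You genuinely differ in two respects. First, where the paper delegates the core computation to \cite[Lemma 2.4]{GMS}, you carry it out explicitly: the cylindrical curl formula, the vanishing of the cross term $\int 2u\,\partial_r u/r\,dx$ after integration in $r$, and \eqref{eq:curl-grad} with $\div\mathbf{E}=0$ yield $\|\mathbf{E}\|_{H^1(\R^3;\R^3)}^2=\|u\|_X^2$; note that your identity correctly retains the $u^2/r^2$ term, which the paper's displayed formula $\int_{\R^3}|\curl\mathbf{E}|^2\,dx=\int_{\R^3}|\nabla u|^2\,dx$ drops (compare the proof of Theorem~\ref{Th:1}, where $\int_{\R^3}\nabla\mathbf{E}\cdot\nabla\mathbf{V}\,dx=\int_{\R^3}\nabla u\cdot\nabla v+uv/r^2\,dx$ is used). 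Second, for $u\in X\Rightarrow\mathbf{E}\in\cD$ the paper argues, again via \cite[Lemma 2.4]{GMS}, that the pointwise gradient of $\mathbf{E}$ is also its distributional gradient, so $\mathbf{E}\in H^1(\R^3;\R^3)$ directly; you instead establish density in $X$ of smooth cylindrically symmetric functions supported away from the axis (cutoff error controlled by $\int_{\R^3}u^2/r^2\,dx<+\infty$, mollification with a radial kernel to preserve the symmetry) and conclude via the isometry on the dense class plus closedness of $\cD$. Both routes are valid: the paper's is shorter, while yours is self-contained and runs both implications through a single isometric correspondence, at the cost of routine verifications you gloss over — that $\|u_n\chi_\eps\|_X\to\|u_n\|_X$ as $\eps\downarrow 0$ (Fatou plus dominated convergence, using the $O(r)$ bound on the annulus $\eps<r<2\eps$), and that the quadratic identity extends from the dense class to all of $\cD$ and $X$ by continuity of both quadratic forms. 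Your treatment of the nonlinear term, $H(x,\mathbf{E})=\wt{F}(x,u)$ via \eqref{eq:h} and the substitution $s=tu(x)$, is exactly the paper's.
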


\begin{proof}
Fix $u \in X$ and let $\mathbf{E}$ be given by \eqref{eq:form}. The same argument as in \cite[Lemma 2.4]{GMS} show that the pointwise gradient of $\mathbf{E}$ in $\R^3$ is also the distributional gradient in $\R^3$. Then it is clear that $\mathbf{E} \in H^1 (\R^3; \R^3)$ and therefore $\mathbf{E} \in \cD$. Moreover it is clear that $\div(\mathbf{E}) = 0$ and
$$
\int_{\R^3} | \curl \mathbf{E} |^2 \, dx = \int_{\R^3} |\nabla u|^2 \, dx.
$$
It is also obvious, from \eqref{eq:h} that 
$$
\int_{\R^3} H(x, \mathbf{E}) \, dx = \int_{\R^3} \tilde{F}(x,u) \, dx.
$$
Moreover
$$
\int_{\R^3} V(x) |\mathbf{E}|^2 \, dx = \int_{\R^3} V(x) \frac{u^2}{r^2} \left| \left( \begin{array}{c}
-x_2 \\ x_1 \\ 0
\end{array} \right) \right|^2 \, dx = \int_{\R^3} V(x) u^2 \, dx.
$$
Thus $\cE (\mathbf{E}) = \cJ(u)$. 

On the other hand, fix $\mathbf{E} \in \cD$, and let $u$ be $SO$-invariant map such that \eqref{eq:form} holds. From \eqref{lem:D} there is a sequence $(\mathbf{U}_n) \subset \cC_0 (\R^3; \R^3) \cap \cC^\infty (\R^3_*; \R^3) \cap \cH \cap \cD$ with
$$
|\nabla \mathbf{U}_n - \nabla \mathbf{E}|_2 + |\mathbf{U}_n - \mathbf{E}|_2 \to 0.
$$ 
Moreover $\mathbf{U}_n$ are of the form \eqref{eq:form}, i.e.
$$
\mathbf{U}_n (x) = \frac{u_n(r, x_3)}{r^2} \left( \begin{array}{c}
-x_2 \\ x_1 \\ 0
\end{array} \right),
$$
where $u_n$ are $SO$-equivariant maps. We will show that $u_n \in X$. Note that $|\mathbf{U}_n|^2 = |u_n|^2$ and therefore $u_n \in L^2 (\R^3)$. Since $u_n \in \cC_0 (\R^3)\cap \cC^\infty (\R^3_*)$ and $|u_n(x)| \leq Cr$ for some $C > 0$ as $r \to 0^+$, uniformly with respect to $x_3$, we have
$$
\int_{\R^3} \frac{|u_n|^2}{r^2} \, dx < +\infty.
$$
The same computation as in \cite[Lemma 2.4]{GMS} shows that $\nabla u_n \in L^2 (\R^3; \R^3)$, and therefore $u_n \in X$. It is clear that
$$
\lim_{n \to +\infty} | u_n - u |_2 = \lim_{n\to +\infty} | \mathbf{U}_n - \mathbf{E} |_2 = 0.
$$
Hence it is sufficient to show that $(u_n)$ is a Cauchy sequence in $X$. Fix $\varepsilon > 0$. We have that
\begin{align*}
\| u_n - u_m \|^2 &= \int_{\R^3} | \nabla (u_n -  u_m) |^2 + \frac{(u_n - u_m)^2}{r^2} + V(x) (u_n - u_m)^2 \, dx \\
&\leq \int_{\R^3} |\nabla (\mathbf{U}_n - \mathbf{U}_m)|^2 \, dx + |V|_\infty \int_{\R^3} | \mathbf{U}_n - \mathbf{U}_m |^2 \, dx \\ &=  |\nabla \mathbf{U}_n - \nabla \mathbf{U}_m|_2^2 + |V|_\infty |\mathbf{U}_n - \mathbf{U}_m|_2^2 < \varepsilon^2
\end{align*}
for sufficiently large $n,m$. Recalling that $\div \mathbf{U}_n(x) = 0$ for $x \in \R^3_*$, we easily see that $\div \mathbf{E} = 0$, where the divergence is taken in the distributional sense.
\end{proof}

Now we are ready to show the equivalence of weak solutions. 

\begin{proof}[Proof of Theorem \ref{Th:1}]
Suppose that $\mathbf{E} \in \cD$ is a weak solution to \eqref{eq:maxwell} and $u$ is a $SO$-invariant function satisfying \eqref{eq:form}. 

We recall that, from the Palais principle of criticality and the invariance of $\cE$ with respect to the action
$$
\mathscr{S} \mathbf{E} = -\mathbf{P}_\rho + \mathbf{P}_\tau - \mathbf{P}_\zeta,
$$ 
where $\mathbf{P}_i (x)$ is the projection of $\mathbf{E}(x)$ onto $\mathbb{V}_i (x)$, where $i \in \{ \rho, \tau, \zeta \}$, with $\mathbb{V}_i$ given in Lemma \ref{lem:D}, there follows that it is sufficient to take test functions from $\cD$ (see \cite[Proposition 1]{Azzolini} and \cite[Proof of Theorem 2.1]{GMS}).

Take any $\mathbf{V} \in \cD$ with $SO$-invariant $v$ satisfying \eqref{eq:form}. Then, arguing as in Lemma \ref{lem:4.1}, we obtain that
$$
\int_{\R^3} \nabla \mathbf{E} \cdot \nabla \mathbf{V} \, dx = \int_{\R^3} \nabla u \cdot \nabla v + \frac{uv}{r^2} \, dx.
$$
and
$$
\int_{\R^3} V(x) \mathbf{E} \cdot \mathbf{V} \, dx = \int_{\R^3} V(x) uv \, dx.
$$
Moreover, recalling \eqref{eq:h}, we get
\begin{align*}
\int_{\R^3} h(x, \mathbf{E}) \cdot \mathbf{V} \, dx &= \int_{\R^3} h \left(x, \frac{u (r, x_3)}{r} \left( \begin{array}{c}
-x_2 \\ x_1 \\ 0
\end{array} \right) \right) \cdot \frac{v(r, x_3)}{r} \left( \begin{array}{c}
-x_2 \\ x_1 \\ 0
\end{array} \right) \, dx \\
&= \int_{\R^3} \frac{1}{r^2} \wt{f}(x, u(r, x_3)) v(r, x_3) \left|  \left( \begin{array}{c}
-x_2 \\ x_1 \\ 0
\end{array} \right) \right|^2 \, dx \\
&= \int_{\R^3}\frac{1}{r^2} \wt{f}(x, u(r, x_3)) v(r, x_3) r^2 \, dx = \int_{\R^3} \wt{f}(x, u) v\, dx.
\end{align*}
Hence
$$
\int_{\R^3}  \nabla u \cdot \nabla v + \frac{uv}{r^2} + V(x) uv \, dx - \int_{\R^3} \wt{f}(x,u)v \, dx = 0
$$
for any $v \in X$, due to Lemma \ref{lem:4.1}. Hence $u \in X$ is a weak solution to \eqref{eq:schrodinger}. The same computation shows that if $u \in X$ is a weak solution to \eqref{eq:schrodinger}, then $\mathbf{E}$ given by \eqref{eq:form} is a weak solution to \eqref{eq:maxwell}.
\end{proof}

\section{Critical point theory}

Suppose that $(E, \| \cdot \|)$ is a Hilbert space and $\cJ : E \rightarrow \R$ is a nonlinear functional of the general form
$$
\cJ(u) = \frac12 \|u\|^2 - \cI(u),
$$
where $\cI$ is of $\cC^1$ class and $\cI(0)=0$. We introduce the following set
$$
\cN := \{ u \in E \setminus \{ 0 \} \ : \ \cJ'(u)(u) = 0 \}.
$$

\begin{Th}\label{abstract}
Suppose that
\begin{itemize}
\item[(J1)] there is $r > 0$ such that
$$
\inf_{\|u\|=r} \cJ(u) > 0;
$$
\item[(J2)] $\frac{\cI (t_n u_n)}{t_n^2} \to +\infty$ for $t_n \to +\infty$ and $u_n \to u \neq 0$;
\item[(J3)] for all $t > 0$ and $u \in \cN$ there holds
$$
\frac{t^2-1}{2} \cI'(u)(u) - \cI(tu) + \cI(u) \leq 0.
$$
\end{itemize}
Then $\Ga \neq \emptyset$, $\cN \neq \emptyset$ and
$$
c := \inf_{\cN} \cJ = \inf_{\gamma \in \Gamma} \sup_{t \in [0,1]} \cJ(\gamma(t)) = \inf_{u \in E \setminus \{0\}} \sup_{t \geq 0} \cJ(tu) > 0,
$$
where
$$
\Gamma := \{ \gamma \in \cC ([0,1], E) \ : \ \gamma(0) = 0, \ \|\gamma(1)\| > r, \ \cJ(\gamma(1)) < 0 \}.
$$
Moreover there is a Cerami sequence for $\cJ$ on the level $c$, i.e. a sequence $\{ u_n \}_n \subset E$ such that
$$
\cJ(u_n) \to c, \quad (1+\|u_n\|) \cJ'(u_n) \to 0.
$$
\end{Th}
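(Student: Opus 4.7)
The plan is to follow a Szulkin--Weth type Nehari-manifold analysis together with a Cerami mountain-pass theorem. The central object is the scalar fibering map $\varphi_u(t) := \cJ(tu)$ for $u \in E\setminus\{0\}$, which satisfies $\varphi_u(0)=0$, $\varphi_u(r/\|u\|)\geq \inf_{\|w\|=r}\cJ(w)>0$ by (J1), and $\varphi_u(t)\to -\infty$ as $t\to\infty$ by (J2). Hence $\varphi_u$ attains a positive global maximum at some $t_u>0$, so $t_u u \in \cN$ and $\cN\neq\emptyset$. A key observation is that (J3), specialized at $v\in\cN$ (where $\cI'(v)(v)=\|v\|^2$), rewrites as $\cJ(tv)\leq \cJ(v)$ for all $t>0$: every point of $\cN$ is a global maximizer of $\cJ$ along its own ray. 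Fixing any $u_0\in\cN$ and taking $T$ so large (via (J2)) that $\cJ(Tu_0)<0$ and $\|Tu_0\|>r$, the path $\gamma(s):=sTu_0$ belongs to $\Gamma$, so $\Gamma\neq\emptyset$. Positivity follows from $\cJ(v)=\sup_{t\geq 0}\cJ(tv)\geq \inf_{\|w\|=r}\cJ(w)>0$ for each $v\in\cN$, hence $c>0$.

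For the three infima, I would argue as follows. First, $\inf_{u\neq 0}\sup_{t\geq 0}\cJ(tu)=\inf_{u\neq 0}\cJ(t_u u)=\inf_{\cN}\cJ=c$, because every ray carries a Nehari-maximizer and each $v\in\cN$ is its own maximizer. Next, $\inf_{\gamma\in\Gamma}\sup_s \cJ(\gamma(s))\leq c$ is obtained by fixing $v\in\cN$ and using the path $\gamma(s)=sTv$, whose supremum is exactly $\cJ(v)$. The reverse inequality is the delicate step: given $\gamma\in\Gamma$, I would show that $\gamma$ must intersect $\cN$ via an intermediate-value argument applied to $s\mapsto t_{\gamma(s)}$. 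As $s\to 0^+$, $\|\gamma(s)\|\to 0$ while $\cJ(t_{\gamma(s)}\gamma(s))\geq c>0$; since $\cJ(v)\to 0$ as $v\to 0$, this forces $\|t_{\gamma(s)}\gamma(s)\|$ bounded away from zero and therefore $t_{\gamma(s)}\to+\infty$. At $s=1$, the inequality $\cJ(\gamma(1))<0<\cJ(t_{\gamma(1)}\gamma(1))$ combined with the global-maximizer property along the ray through $t_{\gamma(1)}\gamma(1)\in\cN$ forces $t_{\gamma(1)}<1$. Continuity of $s\mapsto t_{\gamma(s)}$, coming from $\cJ\in\cC^1$ together with uniqueness of the positive critical point of $\varphi_{\gamma(s)}$ (a standard consequence of the monotonicity encoded in (J3)), and the intermediate value theorem then produce $s^*\in(0,1)$ with $\gamma(s^*)\in\cN$, so $\sup_s\cJ(\gamma(s))\geq \cJ(\gamma(s^*))\geq c$.

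With the characterization $c=\inf_{\gamma\in\Gamma}\sup_{s\in[0,1]}\cJ(\gamma(s))>0$ established, the existence of a Cerami sequence at level $c$ follows from a standard quantitative deformation lemma of Cerami type, equivalently from Ghoussoub's general minimax principle in the Cerami formulation: the geometry from (J1) together with $c>0$ permits a pseudo-gradient flow argument that pushes any almost-minimizing path in $\Gamma$ strictly below $c$ unless a sequence $(u_n)\subset E$ with $\cJ(u_n)\to c$ and $(1+\|u_n\|)\|\cJ'(u_n)\|_{E^*}\to 0$ exists. The main analytic difficulty of the whole proof is the intersection claim in the preceding paragraph, which rests on extracting continuity and uniqueness of the fiber maximizer from the monotonicity condition (J3); all remaining ingredients are routine consequences of (J1)--(J3) and classical minimax theory.
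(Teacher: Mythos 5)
Your overall architecture (fiber maps $\varphi_u(t)=\cJ(tu)$, reading (J3) at points of $\cN$ as a global-maximizer property along rays, mountain-pass geometry, Cerami sequence from a minimax/deformation principle) is the same as the paper's, and several steps are correct as written: $\cN\neq\emptyset$ and $\Gamma\neq\emptyset$ from (J1)--(J2), $c>0$, the identity $\inf_{\cN}\cJ=\inf_{u\neq 0}\sup_{t>0}\cJ(tu)$, and the inequality $\inf_{\gamma\in\Gamma}\sup_s\cJ(\gamma(s))\leq\inf_{\cN}\cJ$ via rays through points of $\cN$. However, the step you yourself flag as the delicate one contains a genuine gap: you invoke \emph{uniqueness} of the positive critical point of $\varphi_{\gamma(s)}$ as ``a standard consequence of the monotonicity encoded in (J3)''. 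It is not. (J3) is a \emph{non-strict} inequality, and under the paper's hypotheses ((F4) only nondecreasing, (G3) only nonincreasing) the fiber map can attain its maximum on an entire interval: the paper proves precisely that for $u\in\cN$ there exist $0<t_{\min}(u)\leq 1\leq t_{\max}(u)$ with $tu\in\cN$ and $\cJ(tu)$ constant on $[t_{\min}(u),t_{\max}(u)]$, and the remark following the theorem states explicitly that only when the inequality in (J3) is strict for $t\neq 1$ does one get $t_{\min}(u)=t_{\max}(u)$ and a homeomorphism $\cN\leftrightarrow\cS$, while ``in our case such a homeomorphism does not need to exist.'' Consequently $s\mapsto t_{\gamma(s)}$ is in general set-valued with no continuous selection guaranteed, and your intermediate-value argument for the intersection $\gamma\cap\cN\neq\emptyset$ does not go through as stated.

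The paper circumvents this with a connectedness argument that requires no continuity or single-valuedness of the fiber maximizer: from (J3) one obtains $\cJ'(tu)(u)>0$ for $t\in(0,t_{\min}(u))$ and $\cJ'(tu)(u)<0$ for $t>t_{\max}(u)$, so that $E\setminus\cN$ consists of two connected components distinguished by the sign of $\cJ'(u)(u)$; the origin and its neighborhood lie in the ``inner'' component, while $\cJ(\gamma(1))<0$ forces $\gamma(1)$ into the ``outer'' one (along its own ray it must sit beyond $t_{\max}$, since $\cJ$ is positive up to and on the maximizing interval), and connectedness of $\gamma([0,1])$ yields the crossing of $\cN$. To repair your route you would either have to strengthen (J3) to a strict inequality for $t\neq 1$ (proving a strictly weaker theorem, inapplicable to the paper's nonlinearities), or replace $t_{\gamma(s)}$ by the set-valued projection $\hat{m}(u)=[t_{\min}(u),t_{\max}(u)]\,u$ and exploit that $\cJ\circ\hat{m}$ is single-valued and locally Lipschitz, in the spirit of de Paiva--Kryszewski--Szulkin, which the paper mentions as an alternative. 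Your final step (Cerami sequence at level $c$ from the mountain pass theorem in Cerami form) agrees with the paper and is fine once the minimax characterization is correctly established.
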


The foregoing theorem can be shown similarly as in \cite{BM, MSSz}; hovewer we do not require the inequality $\cI(u) \geq 0$ (which does not need to be satisfied in our setting). See also \cite[Theorem 2.1]{BM-cpaa}.

\begin{proof}
Observe that there exists $v \in E \setminus \{0\}$ with $\|v\| > r$ such that $\cJ(v) < 0$. Indeed, fix $u \in E \setminus \{0\}$ and from (J2) there follows that
\begin{equation}\label{infty}
\frac{\cJ(tu)}{t^2} = \frac12 \|u\|^2 - \frac{\cI(tu)}{t^2} \to - \infty \quad \mbox{as } t \to +\infty
\end{equation}
and we may take $v := t u$ for sufficiently large $t > 0$. In particular, the family of paths $\Gamma$ is nonempty. Moreover, $\cJ(tu) \to 0$ as $t \to 0^+$ and for $t = \frac{r}{\|u\|} > 0$ we get $\cJ(tu) > 0$. Hence, taking \eqref{infty} into account, $(0,+\infty) \ni t \mapsto \cJ(tu) \in \R$ has a local maximum, which is a critical point of $\cJ(tu)$ and $tu \in \cN$. Hence $\cN \neq \emptyset$. Suppose that $u \in \cN$. Then, from (J3),
$$
\cJ(tu) = \cJ(tu) - \frac{t^2-1}{2} \cJ'(u)(u) \leq \cJ(u)
$$
and therefore $u$ is a maximizer (not necessarily unique) of $\cJ$ on $\R_+ u := \{ su \ : \ s > 0 \}$. Hence, for any $u \in \cN$ there are $0 < t_{\min} (u) \leq 1 \leq t_{\max}(u)$ such that $t u \in \cN$ for any $t \in [t_{\min}(u), t_{\max} (u)]$ and 
$$
[t_{\min}(u), t_{\max} (u)] \ni t \mapsto \cJ(tu) \in \R
$$
is constant. Moreover $\cJ'(tu)(u) > 0$ for $t \in (0, t_{\min}(u))$ and $\cJ'(tu)(u) < 0$ for $t \in (t_{\max} (u), +\infty)$, $E \setminus \cN$ consists of two connected components and any path $\gamma \in \Gamma$ intersects $\cN$. Thus
$$
\inf_{\gamma \in \Gamma} \sup_{t \in [0,1]} \cJ(\gamma(t)) \geq \inf_{\cN} \cJ.
$$
Since
$$
\inf_{\cN} \cJ = \inf_{u \in E \setminus \{0\}} \sup_{t > 0} \cJ(tu)
$$
there follows, under (J1), that
$$
c := \inf_{\gamma \in \Gamma} \sup_{t \in [0,1]} \cJ(\gamma(t)) = \inf_{\cN} \cJ = \inf_{u \in E \setminus \{0\}} \sup_{t > 0} \cJ(tu) \geq \inf_{\|u\| = r} \cJ(u) > 0.
$$
The existence of a Cerami sequence follows from the mountain pass theorem.
\end{proof}

\begin{Rem}\label{rem:5.2}
From the proof there follows that for any $u \in E \setminus \{0\}$ there is an interval $I_u := [t_{\min} (u), t_{\max} (u)] \subset (0, +\infty)$ such that for any $t \in I_u$ there holds $tu \in \cN$. If $u \in \cN$ one can easily see that $1 \in I_u$. Moreover $I_u$ consists of maxima of $\cJ(tu)$, i.e. $\cJ(t' u) \leq \cJ(tu)$ for $t \in I_u$ and $t' \in (0,+\infty)$, the inequality is strict if $t' \in (0, +\infty) \setminus I_u$.
\end{Rem}

\begin{Rem}
If the inequality in (J3) is strict for $t \neq 1$, then $t_{\min} (u) = t_{\max} (u)$ and the Nehari manifold $\cN$ is homeomorphic to the unit sphere $\cS$ in $E$. Then one can apply the method introduced in \cite{SzW} and obtain the existence of a bounded Palais-Smale sequence (see \cite[Theorem 2.1]{BM-cpaa}). In our case such a homeomorphism does not need to exist and we cannot apply directly the Nehari manifold method. Instead of use the mountain pass theorem one can also obtain the existence of a bounded Palais-Smale sequence using the technique from \cite{dPKrSz}, where (instead of the homeomorphism $\cN \leftrightarrow \cS$) the authors use the set-valued projection $$E \setminus \{0\} \ni u \mapsto \hat{m}(u) := [t_{\min}(u), t_{\max}(u)] u := \{ tu \ : \ t_{\min} (u) \leq t \leq t_{\max}(u) \} \subset \cN$$ and the fact that $\cJ(\hat{m}(u))$ is a real-valued, locally Lipschitz continuous function.
\end{Rem}

\section{Existence and boundedness of Cerami sequences}

We recall the notation $\wt{f} (x,u) := f(x,u) - g(x,u)$ and $\wt{F} (x,u) := F(x,u) - G(x,u)$. Then our functional is of the form
$$
\cJ(u) = \|u\|^2 - \int_{\R^3} F(x,u) \, dx + \int_{\R^3} G(x,u) \, dx = \|u\|^2 - \int_{\R^3} \wt{F} (x,u) \, dx.
$$

Note that \eqref{eq:h} implies that $\wt{f}$ is odd in $u$. Indeed, take any $w \in \R^3 \setminus \{0\}$ and $\alpha \in \R$. Then
$$
\wt{f}(x,\alpha) w = h(x, \alpha w) = h(x, (-\alpha)(-w) ) = -\wt{f}(x,-\alpha)w
$$
and $\wt{f}(x,\alpha) = -\wt{f}(x,-\alpha)$ for any $\alpha \in \R$.

We set
$$
\cI(u) := \int_{\R^3} \wt{F} (x,u) \, dx, \quad u \in X.
$$
Moreover, combining \eqref{f-eps} and \eqref{g-eps} we obtain that for any $\eps > 0$ there is $C_\eps$ such that
\begin{equation}\label{fg-eps}
\left| \wt{f}(x,u) \right| \leq \eps |u| + C_\eps \left( |u|^{q-1} + |u|^{p-1} \right).
\end{equation}

To obtain the existence of a Cerami sequence, we need to verify (J1)--(J3) in Theorem \ref{abstract}.

\begin{enumerate}
\item[(J1)] Observe that, from \eqref{f-eps} and Sobolev embeddings,
$$
\int_{\R^3} \tilde{F} (x,u) \, dx \leq \int_{\R^3} F (x,u) \, dx \leq \varepsilon |u|_2^2 + C_\varepsilon |u|_p^p \leq C \left( \varepsilon \|u\|^2 + C_\varepsilon \|u\|^p \right).
$$
Choosing properly $\varepsilon > 0$ and $r > 0$ we see that
$$
\int_{\R^3} \tilde{F} (x,u) \, dx \leq \frac14 \|u\|^2
$$
for $\|u\| \leq r$. Then
$$
\cJ(u) = \frac12 \|u\|^2 - \int_{\R^3} \tilde{F} (x,u) \, dx \geq \frac14  \|u\|^2 = \frac{r^2}{4}
$$
for $\|u\|=r$.
\item[(J2)] Let $t_n \to +\infty$ and $u_n \to u \neq 0$. Then
\begin{align*}
\frac{\cI(t_n u_n)}{t_n^2} &= t_n^{q-2} \frac{\cI(t_n u_n)}{t_n^q} = t_n^{q-2} \frac{\int_{\R^3} \tilde{F} (x,t_n u_n) \, dx}{t_n^q} \\ &= t_n^{q-2} \left( \int_{\R^3} \frac{F(x,t_n u_n)}{t_n^q} \, dx - \int_{\R^3} \frac{G(x,t_n u_n)}{t_n^q} \, dx \right).
\end{align*}
From (F3) and Fatou's lemma there follows that
$$
\int_{\R^3} \frac{F(x,t_n u_n)}{t_n^q} \, dx \to +\infty.
$$
Hence it is sufficient to show that $\int_{\R^3} \frac{G(x,t_n u_n)}{t_n^q} \, dx$ is bounded from above. Taking \eqref{g-eps} into account, we see that
$$
\int_{\R^3} \frac{G(x,t_n u_n)}{t_n^q} \, dx \leq \varepsilon \frac{|u_n|_2^2}{t_n^{q-2}} + C_\varepsilon |u_n|_q^q = o(1) + C_\varepsilon |u_n|_q^q \leq M
$$
for some constant $M > 0$ and the proof is completed.
\item[(J3)] 
Define 
$$
[0,+\infty) \ni t \mapsto \varphi (t) := \frac{t^2-1}{2} \cI'(u)(u) - \cI(tu) + \cI(u) \in \R.
$$
Note that $\varphi(1) = 0$. (G3) implies that $g(x, tu)u \geq t^{q-1} g(x, u)u$ for $t \in (0,1)$ and $g(x, tu)u \leq t^{q-1} g(x, u)u$ for $t > 1$. Moreover
\begin{align*}
\varphi'(t) &= t \cI'(u)(u) - \cI'(tu)(u) \\ &= \int_{\R^3} f(x,u)tu \, dx - \int_{\R^3} f(x,tu)u \, dx - \int_{\R^3} g(x,u)tu \, dx + \int_{\R^3} g(x,tu)u \, dx.
\end{align*}
Suppose that $t \in (0,1)$. Then
\begin{align*}
\varphi'(t) &\geq \int_{\R^3} f(x,u)tu \, dx - \int_{\R^3} f(x,tu)u \, dx - \int_{\R^3} g(x,u)tu \, dx + \int_{\R^3} t^{q-1} g(x, u)u \, dx.
\end{align*}
The Nehari identity
$$
\| u \|^2 = \int_{\R^3} f(x,u)u - g(x,u)u \, dx
$$
imply that 
$$
\int_{\R^3} f(x,u)u \, dx > \int_{\R^3} g(x,u)u \, dx.
$$
Thus
\begin{align*}
\varphi'(t) &\geq \int_{\R^3} f(x,u)tu \, dx - \int_{\R^3} f(x,tu)u \, dx + (t^{q-1} - t) \int_{\R^3} g(x,u)u \, dx \\
&\geq \int_{\R^3} f(x,u)tu \, dx - \int_{\R^3} f(x,tu)u \, dx + (t^{q-1} - t) \int_{\R^3} f(x,u)u \, dx \\
&= t^{q-1} \int_{\R^3}f(x,u)u - \frac{f(x,tu)u}{t^{q-1}} \, dx 
\end{align*}
Hence $\varphi'(t) \geq 0$ for $t \in (0,1)$, by (F4). Similarly $\varphi'(t) \leq 0$ for $t > 1$ and $\varphi(t) \leq \varphi(1) = 0$ for all $t > 0$.
\end{enumerate}

\begin{Lem}\label{lem:1}
There holds $q F(x,u) \leq f(x,u)u$ for $u \in \R$ and a.e. $x \in \R^3$.
\end{Lem}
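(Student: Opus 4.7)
The plan is to use a homogeneity-like substitution $s = tu$ in the integral defining $F$, and then apply the monotonicity assumption (F4) on $s \mapsto f(x,s)/|s|^{q-1}$ to compare $f(x,tu)$ with $t^{q-1}f(x,u)$. The statement is trivial at $u=0$, so I assume $u\neq 0$ throughout.

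First I would rewrite
$$
F(x,u) = \int_0^u f(x,s)\,ds = u \int_0^1 f(x,tu)\, dt,
$$
which is valid whether $u>0$ or $u<0$ (in the negative case the reversal of the integration bounds cancels against the extra sign produced by $ds = u\,dt$). Next, for $t\in(0,1]$, the points $tu$ and $u$ lie on the same open half-line, so (F4) applies to the pair. If $u>0$, then $0<tu\le u$, and (F4) on $(0,+\infty)$ gives $f(x,tu)/(tu)^{q-1}\le f(x,u)/u^{q-1}$, hence $f(x,tu)\le t^{q-1}f(x,u)$; multiplying by $u>0$ preserves the inequality. If $u<0$, then $u\le tu<0$, and (F4) on $(-\infty,0)$ gives the \emph{reverse} inequality $f(x,tu)\ge t^{q-1}f(x,u)$; but multiplying by $u<0$ reverses it once more. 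In both cases,
$$
f(x,tu)\,u \;\le\; t^{q-1} f(x,u)\, u \qquad \text{for every } t\in(0,1].
$$
Integrating this on $[0,1]$ and using the formula above,
$$
F(x,u) = u\int_0^1 f(x,tu)\,dt \;\le\; f(x,u)\,u\int_0^1 t^{q-1}\,dt = \frac{f(x,u)u}{q},
$$
which is the claim after multiplying by $q$.

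The only delicate point is the sign bookkeeping: (F4) yields opposite-direction inequalities on the negative and positive half-lines, so naively one might worry the conclusion fails for $u<0$. The key observation is that the extra factor $u$ in $f(x,tu)\cdot u$ flips the inequality exactly when it is needed, so both sign cases collapse to a single estimate. Apart from this, the argument is a straightforward application of (F4), with no growth condition or measurability concern in play (continuity of $f$ in $u$ already guarantees integrability of $f(x,\cdot)$ on bounded intervals).
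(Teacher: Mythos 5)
Your proof is correct and is essentially the same argument as the paper's: the substitution $s=tu$ merely reparametrizes the paper's direct estimate $F(x,u)=\int_0^u \frac{f(x,s)}{s^{q-1}}\,s^{q-1}\,ds \le \frac{f(x,u)}{u^{q-1}}\int_0^u s^{q-1}\,ds = \frac1q f(x,u)u$, both resting solely on (F4). Your explicit sign bookkeeping for $u<0$ is in fact slightly more careful than the paper's ``similarly for $u<0$'' (where writing $s^{q-1}$ for negative $s$ and non-integer $q$ would need the $|s|^{q-1}$ normalization you use), but it does not change the approach.
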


\begin{proof}
Suppose that $u > 0$. Note that, under (F4),
$$
F(x,u) = \int_0^u f(x,s) \, ds = \int_0^u \frac{f(x,s)}{s^{q-1}} s^{q-1} \, ds \leq \int_0^u \frac{f(x,u)}{u^{q-1}} s^{q-1} \, ds = \frac{1}{q} \frac{f(x,u)}{u^{q-1}} u^q = \frac1q f(x,u) u.
$$
Similarly for $u < 0$.
\end{proof}

\begin{Lem}
There holds $q G(x,u) \geq g(x,u)u$ for $u \in \R$ and a.e. $x \in \R^3$.
\end{Lem}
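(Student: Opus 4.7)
The plan is to mimic the proof of Lemma~\ref{lem:1}, with the single substantive change that (G3) postulates the monotonicity opposite to (F4); this reverses the inequality at every step and delivers the opposite conclusion. The additional hypothesis $g(x,u)u \geq 0$ from (G3) guarantees that both sides of the asserted inequality are nonnegative, so no sign ambiguity arises when one compares them.

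For $u > 0$, I would write
\[
G(x,u) = \int_0^u \frac{g(x,s)}{s^{q-1}}\, s^{q-1}\, ds,
\]
and invoke the nonincreasing property of $s \mapsto g(x,s)/s^{q-1}$ on $(0,+\infty)$: for $0 < s < u$ the ratio at $s$ is bounded from below by $g(x,u)/u^{q-1}$. Pulling this constant outside the integral and using $\int_0^u s^{q-1}\, ds = u^q/q$ yields $G(x,u) \geq \tfrac{1}{q} g(x,u)u$, as desired.

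For $u < 0$, I would rewrite
\[
G(x,u) = -\int_u^0 \frac{g(x,s)}{|s|^{q-1}}\, |s|^{q-1}\, ds
\]
and apply the nonincreasing property on $(-\infty,0)$: for $s \in (u,0)$ one has $g(x,s)/|s|^{q-1} \leq g(x,u)/|u|^{q-1}$, and the minus sign in front of the integral flips this into the correct direction. The elementary computation $\int_u^0 |s|^{q-1}\, ds = |u|^q/q$, together with $|u| = -u$, then produces $G(x,u) \geq -g(x,u)|u|/q = g(x,u)u/q$. No real obstacle arises; the only point worth attention is the sign bookkeeping in the negative case, which is routine once one factors out $|s|^{q-1}$.
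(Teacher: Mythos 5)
Your proof is correct and is essentially the paper's own argument: factor the integrand as $\frac{g(x,s)}{|s|^{q-1}}\,|s|^{q-1}$, apply the monotonicity from (G3) pointwise, and integrate, with the inequality reversed relative to Lemma~\ref{lem:1}. The paper dispatches the case $u<0$ with ``similarly,'' whereas you carry out the sign bookkeeping explicitly (and correctly); the remark about $g(x,u)u \geq 0$ is harmless but not needed.
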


\begin{proof}
Take $u > 0$ and compute, as in Lemma \ref{lem:1},
$$
G(x,u) = \int_0^u g(x,s) \, ds = \int_0^u \frac{g(x,s)}{s^{q-1}} s^{q-1} \, ds \geq \int_0^u \frac{g(x,u)}{u^{q-1}} s^{q-1} \, ds = \frac{1}{q} \frac{g(x,u)}{u^{q-1}} u^q = \frac1q g(x,u) u.
$$
Similarly for $u < 0$.
\end{proof}

\begin{Cor}\label{AR-fg}
There holds $q \wt{F}(x,u) \leq \wt{f}(x,u)u$ for $u \in \R$ and a.e. $x \in \R^3$,
\end{Cor}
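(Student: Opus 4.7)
The plan is essentially to subtract the two preceding lemmas. By definition, $\wt{F}(x,u) = F(x,u) - G(x,u)$ and $\wt{f}(x,u) = f(x,u) - g(x,u)$, so the desired inequality
$$
q\wt{F}(x,u) \leq \wt{f}(x,u)u
$$
rearranges to
$$
q F(x,u) - f(x,u)u \leq q G(x,u) - g(x,u)u.
$$

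First, I would invoke Lemma~\ref{lem:1}, which gives $qF(x,u) \leq f(x,u)u$, i.e. the left-hand side above is $\leq 0$. Next, I would invoke the lemma immediately preceding the corollary, which asserts $qG(x,u) \geq g(x,u)u$, i.e. the right-hand side above is $\geq 0$. Chaining these two one-line bounds yields $qF(x,u) - f(x,u)u \leq 0 \leq qG(x,u) - g(x,u)u$, which is exactly what we needed.

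There is no real obstacle here: both inequalities were already proved above using (F4) and (G3) together with the defining integral formulas for $F$ and $G$. The only bookkeeping point worth noting is that the inequality holds for every $u \in \R$ (including $u=0$, where both sides are $0$) and for almost every $x \in \R^3$, exactly in the regime where the two auxiliary lemmas apply, so the corollary follows by pure addition of inequalities without any further hypothesis.
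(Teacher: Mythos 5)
Your proposal is correct and matches the paper's intended argument exactly: the corollary is stated without proof precisely because it follows by combining Lemma~\ref{lem:1} ($qF(x,u) \leq f(x,u)u$) with the preceding lemma ($qG(x,u) \geq g(x,u)u$) and the definitions $\wt{F} = F - G$, $\wt{f} = f - g$. Your bookkeeping remark about $u = 0$ and the a.e.\ quantifier is accurate but adds nothing beyond what the two lemmas already cover.
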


\begin{Lem}\label{cer-bdd}
Any Cerami sequence $(u_n)$ for $\cJ$ is bounded.
\end{Lem}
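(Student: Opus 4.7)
The plan is to use a standard Ambrosetti--Rabinowitz--type argument, which is made available in this setting precisely by Corollary \ref{AR-fg}. Let $(u_n) \subset X$ be a Cerami sequence, i.e.\ $\cJ(u_n) \to c$ and $(1 + \|u_n\|) \|\cJ'(u_n)\|_{X^*} \to 0$. In particular, $\|u_n\| \cdot \|\cJ'(u_n)\|_{X^*} \to 0$, so
$$
|\cJ'(u_n)(u_n)| \leq \|u_n\| \, \|\cJ'(u_n)\|_{X^*} \to 0.
$$

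The key step is to evaluate the combination $\cJ(u_n) - \tfrac{1}{q} \cJ'(u_n)(u_n)$. A direct computation using $\cJ(u) = \tfrac{1}{2}\|u\|^2 - \int_{\R^3} \wt{F}(x,u)\,dx$ and $\cJ'(u)(u) = \|u\|^2 - \int_{\R^3}\wt{f}(x,u)u\,dx$ gives
$$
\cJ(u_n) - \frac{1}{q}\cJ'(u_n)(u_n) = \left(\frac{1}{2} - \frac{1}{q}\right)\|u_n\|^2 + \frac{1}{q}\int_{\R^3}\bigl[\wt{f}(x,u_n)u_n - q\wt{F}(x,u_n)\bigr]\,dx.
$$
By Corollary \ref{AR-fg} the integrand is pointwise nonnegative, so
$$
\left(\frac{1}{2} - \frac{1}{q}\right)\|u_n\|^2 \leq \cJ(u_n) - \frac{1}{q}\cJ'(u_n)(u_n) = c + o(1).
$$

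Since $q > 2$ we have $\tfrac{1}{2} - \tfrac{1}{q} > 0$, and therefore $\|u_n\|$ is bounded. There is essentially no obstacle here: the whole argument is a three-line application of Corollary \ref{AR-fg}. The only thing one needs to notice is that the sign-changing structure of the nonlinearity is handled correctly because the AR-type inequality holds for $\wt{F}$ and $\wt{f}$ (not just for $F$, $f$ separately), which is exactly what Corollary \ref{AR-fg} asserts by combining Lemmas \ref{lem:1} and its $G$-counterpart with opposite inequalities.
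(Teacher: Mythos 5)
Your proof is correct and follows essentially the same route as the paper: both evaluate $\cJ(u_n) - \tfrac{1}{q}\cJ'(u_n)(u_n)$, use the Cerami condition to absorb the $\cJ'(u_n)(u_n)$ term into $o(1)$, and apply Corollary \ref{AR-fg} to drop the nonnegative integrand, leaving $\bigl(\tfrac12 - \tfrac1q\bigr)\|u_n\|^2$ bounded. Your write-up is in fact slightly more explicit than the paper's about why $\cJ'(u_n)(u_n) \to 0$, but there is no substantive difference.
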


\begin{proof}
Observe that $(\cJ(u_n))$ is bounded. Hence, from Corollary \ref{AR-fg}
\begin{align*}
\cJ(u_n) &=\cJ(u_n) - \frac1q \cJ'(u_n)(u_n) + o(1) \\
&= \left( \frac12 - \frac1q \right) \|u_n\|^2 + \frac1q \int_{\R^3}  \wt{f}(x,u_n)u_n - q \wt{F}(x,u_n) \, dx  + o(1) \\
&\geq\left( \frac12 - \frac1q \right)  \|u_n\|^2 + o(1)
\end{align*}
implies that $(u_n)$ is bounded.
\end{proof}

\section{The existence result}

A slight modification of the proof of \cite[Corollary 3.2, Remark 3.3]{Mederski} shows that the following concentration-compactness principle holds true.

\begin{Cor}\label{lem:lions}
Suppose that $(w_n) \subset X$ is bounded and for all $R > 0$ satisfies
\begin{equation}\label{lions}
\lim_{n\to+\infty} \sup_{z \in \R} \int_{B((0,0,z), R)} |w_n|^2 \, dx = 0.
\end{equation}
Then 
$$
\int_{\R^3} |\Psi(x, w_n)| \, dx \to 0 \mbox{ as } n \to +\infty
$$
for any Carath\'eodory function\footnote{We say that $\Psi : \R^3 \times \R \rightarrow \R$ is a Carath\'eodory function if $\Psi = \Psi(x,s)$ is measurable in $x \in \R^3$ and continuous in $s \in \R$.} $\Psi : \R^3 \times \R \rightarrow \R$ satisfying
$$
\lim_{s \to 0} \frac{\Psi(x, s)}{s^2} =  \lim_{|s| \to +\infty} \frac{\Psi(x, s)}{s^6} = 0 \mbox{ uniformly in } x \in \R^3
$$
and $|\Psi(x,s)| \leq c (1 + |s|^6)$ for some $c > 0$.
\end{Cor}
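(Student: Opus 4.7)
The plan is to reduce the claim to an $L^p$-vanishing statement for $w_n$ for some $p\in(2,6)$, and then to derive that vanishing from the cylindrical structure of $X$ along the lines of \cite[Corollary 3.2, Remark 3.3]{Mederski}. First, I would combine the three hypotheses on $\Psi$: the behaviour $\Psi(x,s)/s^2\to 0$ at $s=0$ and $\Psi(x,s)/s^6\to 0$ at $|s|=\infty$ (both uniform in $x$), together with the overall growth bound $|\Psi(x,s)|\le c(1+|s|^6)$, deliver, for every $\varepsilon>0$ and any preassigned $p\in(2,6)$, a constant $C_\varepsilon>0$ with
\[
|\Psi(x,s)|\;\le\;\varepsilon\bigl(s^2+s^6\bigr)+C_\varepsilon|s|^p, \qquad s\in\R,\ \text{a.e. } x\in\R^3.
\]
Since $(w_n)\subset X$ is bounded, the Sobolev embeddings $X\hookrightarrow L^2(\R^3)\cap L^6(\R^3)$ yield $|w_n|_2^2+|w_n|_6^6\le C$, so integrating the pointwise estimate gives $\int_{\R^3}|\Psi(x,w_n)|\,dx\le C\varepsilon+C_\varepsilon|w_n|_p^p$. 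Because $\varepsilon>0$ is arbitrary, it suffices to prove $|w_n|_p\to 0$ for some (equivalently, for every) $p\in(2,6)$.

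To obtain the $L^p$-vanishing I would split $\R^3=T_M\cup T_M^c$ with $T_M:=\{r\le M\}$ and treat the two pieces separately. Inside the tube, every Euclidean ball $B(y,R_0)$ with centre $y=(y_1,y_2,y_3)\in T_M$ satisfies $B(y,R_0)\subset B((0,0,y_3),R_0+M)$ by the triangle inequality, so the hypothesis \eqref{lions} applied with $R:=R_0+M$ forces $\sup_{y\in T_M}\int_{B(y,R_0)}|w_n|^2\,dx\to 0$; a standard Lions--Gagliardo--Nirenberg argument on a bounded-overlap covering of $T_M$ by axis-centred balls then delivers $|w_n|_{L^p(T_M)}\to 0$ for each fixed $M$. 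On the exterior $T_M^c$ I would exploit the cylindrical symmetry and the weighted bound $\int_{\R^3}|w_n|^2/r^2\,dx\le C$: a one-dimensional Sobolev--Gagliardo--Nirenberg estimate in the $x_3$-variable gives, for a.e.\ $r$,
\[
\int_{\R}|w_n(r,x_3)|^p\,dx_3 \;\le\; C\,P_n(r)^{(p+2)/4}\,Q_n(r)^{(p-2)/4},
\]
where $P_n(r):=\int_\R |w_n(r,\cdot)|^2$ and $Q_n(r):=\int_\R|\partial_{x_3}w_n(r,\cdot)|^2$; integrating against $r\,dr$ and combining with the three bounds $\int P_n\,r\,dr\le C$, $\int Q_n\,r\,dr\le C$, and $\int P_n/r\,dr\le C$ (the last being the cylindrical singular-weight bound) yields an estimate of the form $|w_n|_{L^p(T_M^c)}^p\le\eta(M)$ with $\eta(M)\to 0$ as $M\to\infty$, uniformly in $n$. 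Choosing $M$ large first and then letting $n\to\infty$ completes the proof.

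The main obstacle will be the exterior estimate on $T_M^c$: mass can a priori escape to large $r$, and the weighted control $\int|w_n|^2/r^2\le C$ alone does \emph{not} make $|w_n|_{L^2(T_M^c)}$ small (a travelling spike on $\{r\approx r_n\}$ with $r_n\to\infty$ keeps the $L^2$-norm bounded). The reason that the $L^p$-norm with $p>2$ nevertheless does vanish there is a cylindrical Strauss-type phenomenon: for such a spike the constraints $|w_n|_2^2\sim a_n^2 r_n\le C$ and $\int|w_n|^2/r^2\sim a_n^2/r_n\le C$ force $a_n^p r_n\sim r_n^{1-p/2}\to 0$. Turning this heuristic into the quantitative bound $\eta(M)\to 0$ via the three weighted integral inequalities above is exactly the ``slight modification'' of Mederski's proof mentioned in the statement, and it is also the step that lets the conclusion pass from one specific nonlinearity to an arbitrary Carath\'eodory $\Psi$ satisfying the indicated two-sided growth.
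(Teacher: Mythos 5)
Your overall architecture (reduce to $|w_n|_p\to 0$ for one $p\in(2,6)$ via the $\varepsilon$-splitting of $\Psi$; interior Lions argument on the tube $T_M$, correctly exploiting that \eqref{lions} only involves axis-centred balls since $B(y,R_0)\subset B((0,0,y_3),R_0+M)$ for $y\in T_M$; symmetry-based smallness on the exterior) is sound, and it is essentially the structure of the argument the paper invokes — note the paper itself gives no proof at all, only the one-line citation of Mederski \cite[Corollary 3.2, Remark 3.3]{Mederski} ``with a slight modification''. However, your exterior step contains a genuine gap: the three bounds you propose to combine, namely $\int_0^\infty P_n\,r\,dr\le C$, $\int_0^\infty Q_n\,r\,dr\le C$ and $\int_0^\infty P_n\,r^{-1}\,dr\le C$, do \emph{not} imply $\int_M^\infty P_n^{(p+2)/4}Q_n^{(p-2)/4}\,r\,dr\le\eta(M)$. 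Indeed, for every fixed $M$ the supremum of the left-hand side over pairs $(P,Q)$ satisfying those three constraints is $+\infty$: take $P=Q=h\,\chi_{[M,M+\ell]}$ with $h=c/(\ell M)$, which satisfies all three bounds, while
$$
\int_M^\infty P^{\frac{p+2}{4}}Q^{\frac{p-2}{4}}\,r\,dr \ \sim\ h^{p/2}\,\ell M \ =\ c^{p/2}(\ell M)^{1-\frac p2}\ \longrightarrow\ +\infty \quad\mbox{as } \ell\to 0^+,
$$
because $p>2$. (The singular weight is even weaker than $\int P\,r\,dr$ on $\{r\ge M\}$, so it buys nothing there.) What excludes such thin annular spikes for actual elements of a bounded set in $X$ is the \emph{radial} part of the gradient, which your sketch never uses; your closing heuristic with the travelling spike implicitly assumes unit-width bumps and therefore misses exactly this degeneration.

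The gap is fixable within your scheme by adding the fourth bound $\int_0^\infty R_n(r)\,r\,dr\le C$, where $R_n(r):=\int_\R|\partial_r w_n|^2\,dx_3$, and deriving from it the Strauss-type decay of the slice mass. Since $|P_n'(r)|\le 2P_n(r)^{1/2}R_n(r)^{1/2}$ for a.e.\ $r$ and $P_n(r_k)\to 0$ along some $r_k\to+\infty$ (as $\int P_n\,r\,dr<\infty$), Cauchy--Schwarz gives
$$
P_n(r)\ \le\ 2\int_r^\infty P_n^{1/2}R_n^{1/2}\,ds\ \le\ \frac{2}{r}\Big(\int_0^\infty P_n\,s\,ds\Big)^{1/2}\Big(\int_0^\infty R_n\,s\,ds\Big)^{1/2}\ \le\ \frac{C_0}{r},
$$
uniformly in $n$. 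Then, setting $\theta:=\frac{6-p}{4}$ so that $\theta+\frac{p-2}{4}=1$ and $\frac{p+2}{4}-\theta=\frac{p-2}{2}$, H\"older's inequality with exponents $1/\theta$ and $4/(p-2)$ yields
$$
\int_M^\infty P_n^{\frac{p+2}{4}}Q_n^{\frac{p-2}{4}}\,r\,dr\ \le\ \Big(\sup_{r\ge M}P_n\Big)^{\frac{p-2}{2}}\int_M^\infty (P_n r)^{\theta}(Q_n r)^{\frac{p-2}{4}}\,dr\ \le\ \Big(\frac{C_0}{M}\Big)^{\frac{p-2}{2}}\Big(\int_0^\infty P_n r\,dr\Big)^{\theta}\Big(\int_0^\infty Q_n r\,dr\Big)^{\frac{p-2}{4}},
$$
giving $\eta(M)=C\,M^{-(p-2)/2}$ and closing the proof; interestingly, the singular-weight bound $\int P_n/r\,dr\le C$, which you emphasized, is not needed at all — only cylindrical symmetry and boundedness in $H^1$ enter.
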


Corollary \ref{lem:lions} easily imply the following fact.

\begin{Lem}\label{lem:7.2}
Suppose that bounded sequence $(w_n) \subset X$ satisfies \eqref{lions} for every $R > 0$. Then
$$
\int_{\R^3} \wt{f}(x, \xi_n) w_n \, dx \to 0
$$
for every bounded $(\xi_n) \subset X$.
\end{Lem}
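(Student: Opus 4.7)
\medskip

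\noindent \textbf{Proof proposal for Lemma \ref{lem:7.2}.} The plan is to bound $|\wt{f}(x,\xi_n)w_n|$ pointwise using the growth inequality \eqref{fg-eps}, split the resulting integral into three terms via H\"older's inequality, and then dispatch the dangerous ``linear'' term by a small parameter $\eps$ while killing the two superlinear terms through Corollary \ref{lem:lions}.

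First I would apply \eqref{fg-eps}: for every $\eps > 0$ there is $C_\eps > 0$ such that
\begin{equation*}
|\wt{f}(x,\xi_n)w_n| \leq \eps |\xi_n||w_n| + C_\eps |\xi_n|^{q-1} |w_n| + C_\eps |\xi_n|^{p-1} |w_n|
\end{equation*}
pointwise a.e. in $\R^3$. Integrating and using H\"older's inequality with exponent pairs $(2,2)$, $(q/(q-1),q)$ and $(p/(p-1),p)$ respectively gives
\begin{equation*}
\left| \int_{\R^3} \wt{f}(x,\xi_n) w_n \, dx \right| \leq \eps |\xi_n|_2 |w_n|_2 + C_\eps |\xi_n|_q^{q-1} |w_n|_q + C_\eps |\xi_n|_p^{p-1} |w_n|_p.
\end{equation*}
Since $(\xi_n)$ and $(w_n)$ are bounded in $X$ and $X \hookrightarrow L^t(\R^3)$ continuously for every $t \in [2,6]$, all the norms $|\xi_n|_2$, $|\xi_n|_q$, $|\xi_n|_p$, $|w_n|_2$ are uniformly bounded in $n$.

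Next I would invoke Corollary \ref{lem:lions} with the test functions $\Psi(x,s) := |s|^q$ and $\Psi(x,s) := |s|^p$. Since $2 < q < p < 6$, both satisfy $\Psi(x,s)/s^2 \to 0$ as $s \to 0$ and $\Psi(x,s)/s^6 \to 0$ as $|s| \to +\infty$ uniformly in $x$, together with the growth bound $|\Psi(x,s)| \leq 1 + |s|^6$. Corollary \ref{lem:lions} therefore yields $|w_n|_q \to 0$ and $|w_n|_p \to 0$. Consequently
\begin{equation*}
\limsup_{n \to +\infty} \left| \int_{\R^3} \wt{f}(x,\xi_n) w_n \, dx \right| \leq \eps \cdot C,
\end{equation*}
where $C > 0$ is independent of $\eps$ and $n$. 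Since $\eps > 0$ is arbitrary, the limit is zero.

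The only step that is not completely mechanical is verifying that the pure-power functions $|s|^q$, $|s|^p$ fit the hypotheses of Corollary \ref{lem:lions}, but since $q,p \in (2,6)$ this is immediate; the whole difficulty of the vanishing phenomenon is already packaged in that corollary, so there is no real obstacle beyond a careful application of H\"older.
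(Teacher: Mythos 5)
Your proposal is correct and follows essentially the same route as the paper's proof: the pointwise bound \eqref{fg-eps}, H\"older's inequality with the same three exponent pairs, Corollary \ref{lem:lions} to obtain $|w_n|_q \to 0$ and $|w_n|_p \to 0$, and finally letting $\eps \to 0$ to dispatch the remaining term $\eps\,|\xi_n|_2 |w_n|_2$ via the $L^2$-boundedness of both sequences. Your explicit verification that $\Psi(x,s) = |s|^q$ and $\Psi(x,s) = |s|^p$ satisfy the hypotheses of Corollary \ref{lem:lions} is a detail the paper leaves implicit, but it changes nothing in substance.
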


\begin{proof}
Observe that \eqref{fg-eps} and H\"older's inequality imply that
$$
\left| \int_{\R^3} \wt{f}(x, \xi_n) w_n \, dx \right| \leq \varepsilon | \xi_n |_2 |w_n|_2 + C_\varepsilon |\xi_n|_p^{p-1} |w_n|_p + C_\varepsilon |\xi_n|_q^{q-1} |w_n|_q.
$$
Corollary \ref{lem:lions} imply that $|w_n|_p \to 0$ and $|w_n|_q \to 0$. Since $(\xi_n)$ is bounded in $L^p (\R^3)$ and in $L^q (\R^3)$ we get that
\begin{align*}
\limsup_{n\to+\infty} \left| \int_{\R^3} \wt{f}(x, \xi_n) w_n \, dx \right| \leq \varepsilon \limsup_{n\to+\infty} | \xi_n |_2 |w_n|_2
\end{align*}
for every $\varepsilon > 0$. Therefore
$$
\int_{\R^3} \wt{f}(x, \xi_n) w_n \, dx \to 0,
$$
since $(w_n)$ and $(\xi_n)$ are bounded in $L^2 (\R^3)$.
\end{proof}

\begin{proof}[Proof of Theorem \ref{Th:2}]

Let $(u_n) \subset X$ be a Cerami sequence given by Theorem \ref{abstract}. From Lemma \ref{cer-bdd}, without loss of generality, we may assume that $u_n \weakto u_0$ for some $u_0 \in X$. Assume that for any $R > 0$ \eqref{lions} holds true for $w_n := u_n - u_0$. From the weak-to-weak* continuity of $\cJ'$ there follows that $\cJ'(u_0) = 0$.
Then
\begin{align*}
\cJ'(u_n)(u_n - u_0) &= \| u_n - u_0 \|^2 + \langle u_0, u_n - u_0 \rangle - \int_{\R^3} \wt{f}(x, u_n)(u_n - u_0) \, dx.
\end{align*}
Hence
$$
\| u_n - u_0 \|^2 = \cJ'(u_n)(u_n - u_0) - \langle u_0, u_n - u_0 \rangle + \int_{\R^3} \wt{f}(x, u_n)(u_n - u_0) \, dx.
$$
Since $\cJ'(u_0)(u_n - u_0) = \langle u_0, u_n - u_0 \rangle - \int_{\R^3} \wt{f}(x, u_0)(u_n - u_0) \, dx = 0$ we get
$$
\| u_n - u_0 \|^2 = \cJ'(u_n)(u_n - u_0) + \int_{\R^3} \wt{f}(x, u_n)(u_n - u_0) \, dx - \int_{\R^3} \wt{f}(x, u_0)(u_n - u_0) \, dx.
$$
Obviously
$$
\cJ'(u_n)(u_n - u_0) \to 0.
$$
From Lemma \ref{lem:7.2} there follows that
$$
\int_{\R^3} \wt{f}(x, u_n)(u_n - u_0) \, dx \to 0 \quad \mbox{and} \quad \int_{\R^3} \wt{f}(x, u_0)(u_n - u_0) \, dx \to 0.
$$
Hence $u_n \to u_0$ in $X$. If $u_0 \neq 0$, then we have also that $\cJ(u_n) \to \cJ(u_0) = c$ and the proof is completed. If $u_0 = 0$, we obtain that $\cJ(u_n) \to 0$ and $c = 0$ - a contradiction. 

Thus there are $R > 0$ and $(z_n) \subset \R$ such that
$$
\limsup_{n \to +\infty} \int_{B((0,0,z_n), R)} |u_n|^2 \,dx > 0.
$$
Taking, if necessary, a larger radius $R$ we may assume that $z_n \in \Z$. Moreover, up to a subsequence, $|z_n| \to +\infty$. 

Put $v_n (r, z) := u_n ( r , z - z_n)$. Then $v_n \in X$ (in particular, is cylindircally symmetric), and
$$
\|v_n\| = \|u_n\|, \ \cJ(v_n) = \cJ(u_n) \to c, \ (1+\|v_n\|) \cJ'(v_n) = (1+\|u_n\|) \cJ'(u_n) \to 0,
$$
since $V$, $f$, $g$ are 1-periodic in $x_3$.

Thus $(v_n) \subset X$ is also a bounded Cerami sequence at level $c$ with
$$
\limsup_{n \to +\infty} \int_{B(0, R)} |v_n|^2 \, dx > 0.
$$
Hence, up to a subsequence, $v_n \weakto v_0 \neq 0$ in $X$. Moreover, from the weak-to-weak* continuity there follows that $\cJ'(v_0) = 0$, in particular $v_0 \in \cN$ and $\cJ(v_0) \geq c$. Hence it is sufficient to show that $\cJ(v_0) = c$. 

Indeed, from the weak lower semicontinuity of the norm and the Fatou's lemma
\begin{align*}
c &= \lim_{n\to+\infty} \cJ(v_n) = \lim_{n\to+\infty} \left( \cJ(v_n) - \frac1q \cJ'(v_n) \, dx \right) \\ &= \lim_{n\to+\infty} \left[ \left( \frac12 - \frac1q \right) \| v_n\|^2 + \int_{\R^3}  \frac1q \wt{f}(x,v_n)v_n - \wt{F}(x,v_n) \, dx \right] \\
&\geq \left( \frac12 - \frac1q \right) \| v_0 \|^2  + \int_{\R^3}  \frac1q \wt{f}(x,v_0)v_0 - \wt{F}(x,v_0) \, dx = \cJ(v_0) - \frac1q \cJ'(v_0)(v_0) = \cJ(v_0) \geq c.
\end{align*}
In particular $\cJ(v_0) = c$ and the proof is completed.

\end{proof}

\begin{proof}[Proof of Theorem \ref{Th:3}]
The statement is a direct consequence of Theorem \ref{Th:1} and Theorem \ref{Th:2}.
\end{proof}

\section{Appendix: The multiplicity of solutions}

In the appendix we will show that the problem \eqref{eq:schrodinger} admits infinitely many solutions. Suppose that $u \in X$ is a weak solution to \eqref{eq:schrodinger}. Then, for any $z \in \Z$, $u(\cdot, \cdot, \cdot - z) \in X$ is also a weak solution \eqref{eq:schrodinger}. We introduce the action of $\Z$ on $X$ by
$$
\Z \times X \ni (z, u) \mapsto u(\cdot, \cdot, \cdot - z) \in X.
$$
Let 
$$
\cO(u) := \{ u(\cdot, \cdot, \cdot - z) \ : \ z \in \Z \}
$$
denote the orbit of $u \in X$. We say that two solutions $u, v \in X$ of \eqref{eq:schrodinger} are \textit{geometrically distinct}, if $\cO(u) \cap \cO(v) = \emptyset$. Now we can state the result.

\begin{Th}\label{Th:4}
Suppose that (V), (F1)--(F4), (G1)--(G3) hold. Then there exists infinitely many pairs $\pm u \in H^1 (\R^3)$ of cylindrically symmetric, weak solutions to \eqref{eq:schrodinger} with $\int_{\R^3} \frac{u^2}{r^2} \, dx < +\infty$.
\end{Th}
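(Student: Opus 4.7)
The strategy mirrors the classical multiplicity schemes for periodic Schr\"odinger equations with odd nonlinearity, e.g.\ \cite{SzW, Mederski, BM}, adapted to the singular space $X$. First, I would exploit the two symmetries of the problem: $\cJ$ is even in $u$ (since $\wt f$ was shown to be odd at the start of Section 6) and $\cJ$ is invariant under the $\mathbb{Z}$-action $(z\cdot u)(r,x_3):=u(r,x_3-z)$, thanks to the $1$-periodicity of $V,f,g$ in $x_3$ and to the fact that $x_3$-shifts preserve cylindrical symmetry and the weight $1/r^{2}$. This action leaves $\cN$ invariant.

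Second, I would construct a diverging sequence of minimax values via the Krasnoselskii genus $\gamma$: set
$$c_k:=\inf\{d>0\;:\;\gamma(\{u\in\cN\;:\;\cJ(u)\le d\})\ge k\}.$$
Conditions (J1)--(J3) together with Remark \ref{rem:5.2} provide a set-valued projection from the unit sphere $\cS\subset X$ onto $\cN$, which makes the standard even deformation arguments (see \cite[\S5]{BM-cpaa} and \cite{dPKrSz}) applicable. This yields $0<c_1\le c_2\le\cdots$ and, by testing against $k$-dimensional subspaces of $X$ on which $\cJ\to-\infty$ in every direction by (J2), one obtains $c_k\to+\infty$.

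Third, and this is the crucial ingredient, I would prove a splitting lemma: every Cerami sequence $(u_n)\subset X$ for $\cJ$ at level $c$ admits, up to a subsequence, a decomposition
$$u_n=u^{(0)}+\sum_{j=1}^{\ell}u^{(j)}(r,x_3-z_n^{(j)})+o(1)\text{ in }X,$$
where $u^{(0)},u^{(j)}\in X$ are critical points of $\cJ$, $(z_n^{(j)})\subset\Z$, $|z_n^{(j)}|\to\infty$, $|z_n^{(j)}-z_n^{(i)}|\to\infty$ for $i\ne j$, and $c=\cJ(u^{(0)})+\sum_{j=1}^\ell\cJ(u^{(j)})$. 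Following the proof of Theorem \ref{Th:2}, the iteration goes: at each stage either \eqref{lions} holds for the current remainder $w_n$, in which case Lemma \ref{lem:7.2} gives strong convergence and the procedure stops, or Corollary \ref{lem:lions} and translation in $x_3$ produce a new nontrivial weak limit that is a critical point by the weak-to-weak* continuity of $\cJ'$. The iteration terminates after finitely many steps because every nontrivial critical value is bounded below by $c_1>0$ (by (J1)) while the total energy is uniformly bounded.

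Finally, I would conclude by contradiction. If there were only finitely many pairs $\pm v_1,\dots,\pm v_N$ of geometrically distinct solutions, then by the splitting lemma the set of critical values would be contained in the countable discrete set $\{\sum_{j=1}^N n_j\cJ(v_j)\;:\;n_j\in\N\cup\{0\}\}$, and Cerami sequences at levels outside this set would be precompact modulo $\mathbb{Z}$. Combined with the evenness of $\cJ$, an equivariant deformation argument in the spirit of \cite{BM, dPKrSz} would then force each $c_k$ to be a critical value and each critical set at level $c_k$ to have genus at least $k$; since $c_k\to\infty$ this produces infinitely many critical $\mathbb{Z}$-orbits, contradicting $N<\infty$. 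The main obstacle will be the splitting lemma: one has to control the singular weighted term $\int(u_n-\cdots)^2/r^2\,dx$ at every iteration, using that $x_3$-translates of any $u^{(j)}\in X$ preserve the $L^2(r^{-2}dx)$-norm, and that the cylindrical symmetry passes to the weak limit. Everything else reduces to standard (even if technical) Lions-type concentration-compactness bookkeeping.
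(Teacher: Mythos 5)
Your overall scheme is exactly the one the paper sketches: exploit evenness of $\cJ$ (oddness of $\wt{f}$) and the $\Z$-invariance in $x_3$, pass to the unit sphere $S$ via the set-valued projection $\hat{m}(u) = [t_{\min}(u), t_{\max}(u)]u$ of Remark \ref{rem:5.2}, use the locally Lipschitz functional $\cJ \circ \hat{m}$ with the Clarke subdifferential as in \cite{dPKrSz}, and run a genus-type minimax against a contradiction with finiteness of the orbit set. One structural difference: the paper does not prove (or need) your full splitting lemma. It only needs the weaker \emph{discreteness of Palais--Smale sequences} (\cite[Proposition 3.1]{KrSz}, \cite[Lemma 2.14]{SzW}), which is established \emph{under} the finiteness assumption by essentially the first iteration of your profile decomposition, using Corollary \ref{lem:lions} and Lemma \ref{lem:7.2} as you describe. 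Your multi-bump decomposition with full energy quantization is correct in spirit but is substantially more work than the argument requires.

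The genuine gap is your claim that $c_k \to +\infty$ ``by testing against $k$-dimensional subspaces of $X$ on which $\cJ \to -\infty$ in every direction.'' That argument is backwards: mapping a $(k-1)$-sphere of a $k$-dimensional subspace into $\cN$ via $\hat{m}$ produces symmetric sets of genus $\geq k$ on which $\sup \cJ$ is finite, which yields the \emph{upper} bounds $c_k < +\infty$ (i.e., well-definedness of the minimax values); it gives no lower bounds whatsoever. In the $x_3$-periodic, noncompact setting there is no available mechanism forcing divergence of the genus values, and your final contradiction (``since $c_k \to \infty$ this produces infinitely many critical $\Z$-orbits'') leans on precisely this unproved claim. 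The repair is the route of \cite[Propositions 3.2--3.3]{KrSz} and \cite[Lemmas 2.15--2.16]{SzW}, which the paper follows: under the finiteness assumption, the critical set of $\cJ \circ \hat{m}$ at any fixed level is a countable, closed, symmetric set not containing $0$, hence has genus $1$; the equivariant nonsmooth deformation lemma (built from the discreteness of Palais--Smale sequences) then shows that $c_k = c_{k+1}$ would force that critical set to have genus $\geq 2$, so in fact $c_1 < c_2 < c_3 < \cdots$ strictly. An infinite strictly increasing sequence of critical values contradicts finiteness of $\cF$ directly, with no divergence of $c_k$ needed. With that substitution your argument coincides with the paper's proof.
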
 

As a direct consequence of Theorem \ref{Th:1} we immediately obtain the multiplicity result for the curl-curl problem \eqref{eq:maxwell}.

\begin{Th}\label{Th:5}
Suppose that (V), (F1)--(F4), (G1)--(G3) hold. Then there exist infinitely many pairs $\pm \mathbf{E} \in H^1 (\R^3; \R^3)$ of weak solution to \eqref{eq:maxwell} of the form \eqref{eq:form} for some cylindrically symmetric $\pm u \in H^1 (\R^3)$.
\end{Th}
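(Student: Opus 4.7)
The plan is to derive Theorem \ref{Th:5} as an immediate corollary of the equivalence result Theorem \ref{Th:1} together with the multiplicity result Theorem \ref{Th:4} for the reduced Schr\"odinger equation \eqref{eq:schrodinger}. Since Theorem \ref{Th:1} establishes a bijective correspondence between cylindrically symmetric weak solutions $u \in X$ of \eqref{eq:schrodinger} and weak solutions $\mathbf{E} \in H^1(\R^3; \R^3)$ of \eqref{eq:maxwell} of the form \eqref{eq:form}, once we have infinitely many geometrically distinct pairs $\pm u$ on the Schr\"odinger side, infinitely many pairs $\pm \mathbf{E}$ on the Maxwell side come for free.

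First I would invoke Theorem \ref{Th:4} to produce a sequence $\{\pm u_n\}_{n \geq 1} \subset X$ of cylindrically symmetric weak solutions to \eqref{eq:schrodinger}, pairwise geometrically distinct in the sense of the $\Z$-action $(z,u) \mapsto u(\cdot,\cdot, \cdot - z)$ introduced in the appendix. For each $n$ I set
$$
\mathbf{E}_n(x) := \frac{u_n(r, x_3)}{r} \left( \begin{array}{c} -x_2 \\ x_1 \\ 0 \end{array} \right), \quad r = \sqrt{x_1^2 + x_2^2}.
$$
By the second half of Theorem \ref{Th:1}, each $\mathbf{E}_n$ lies in $H^1(\R^3; \R^3)$, is a weak solution to \eqref{eq:maxwell} of the form \eqref{eq:form}, and the assignment $u_n \mapsto -u_n$ on the Schr\"odinger side is transported to $\mathbf{E}_n \mapsto -\mathbf{E}_n$, so pairs on one side are mapped to pairs on the other.

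Next I would verify that the correspondence $u \leftrightarrow \mathbf{E}$ of Theorem \ref{Th:1} is injective on cylindrically symmetric elements of $X$: at any point with $r > 0$ formula \eqref{eq:form} uniquely recovers $u(r,x_3)$ from $\mathbf{E}(x)$. Consequently, if $\mathbf{E}_n = \pm \mathbf{E}_m$ then $u_n = \pm u_m$, and by the geometric distinctness of the pairs $\pm u_n$ we conclude $n = m$. This gives infinitely many genuinely distinct pairs $\pm \mathbf{E}_n$ of weak solutions to \eqref{eq:maxwell} as claimed.

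The main obstacle, namely the actual production of infinitely many geometrically distinct critical points of $\cJ$, is entirely absorbed into Theorem \ref{Th:4}; its proof (sketched in the appendix) rests on the $\Z$-translation invariance of the problem and a symmetric minimax / Lusternik--Schnirelmann argument on the Nehari-type set $\cN$, exploiting the oddness of $\wt{f}$ already established in Section~6. At the level of Theorem \ref{Th:5} itself there is no further analytic difficulty, since Theorem \ref{Th:1} already guarantees that the $H^1$ regularity, the form \eqref{eq:form}, and the cylindrical symmetry of $u$ are simultaneously preserved on both sides of the correspondence.
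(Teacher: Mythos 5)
Your proposal is correct and follows exactly the paper's route: the paper derives Theorem \ref{Th:5} as an immediate consequence of the equivalence result (Theorem \ref{Th:1}) applied to the infinitely many pairs $\pm u$ furnished by Theorem \ref{Th:4}, which is precisely your argument. Your added verification that the map $u \mapsto \mathbf{E}$ in \eqref{eq:form} is injective (so distinct pairs $\pm u_n$ yield distinct pairs $\pm \mathbf{E}_n$) is a detail the paper leaves implicit, and it is a worthwhile one to spell out.
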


We will provide the sketch of the proof based on the method introduced in \cite{KrSz}, see also \cite{SzW}. Let $\hat{m} : X \setminus \{0\} \rightarrow 2^\cN$ be the set-valued map defined by
$$
\hat{m}(u) := [t_{\min}(u), t_{\max}(u)] u,
$$
where $t_{\min}(u), t_{\max}(u)$ are positive numbers defined in the proof of Theorem \ref{abstract}, see also Remark \ref{rem:5.2}. From the proof of Theorem \ref{abstract} there follows that $\cJ(v_1) = \cJ(v_2)$ for all $v_1, v_2 \in \hat{m}(u)$ with $u \in X \setminus \{0\}$ fixed. Hence the composition $\cJ \circ \hat{m} : X \setminus \{0\} \rightarrow \R$ is a real-valued map. Let $m := \hat{m} |_S$, where
$$
S := \{ u \in X \ : \ \|u\| = 1 \}.
$$
Then to each $[t_{\min}(u), t_{\max}(u)] u \subset \cN$ corresponds exactly one $u \in S$. We observe the following regularity of $\cJ \circ \hat{m}$, see \cite[Proposition 2.6]{KrSz}.

\begin{Lem}
$\cJ \circ \hat{m}$ is locally Lipschitz continuous.
\end{Lem}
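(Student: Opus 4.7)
The plan is to realize $\cJ \circ \hat m$ as the pointwise maximum
$$
(\cJ \circ \hat m)(u) = \sup_{t > 0} \cJ(tu), \qquad u \in X \setminus \{0\},
$$
which follows from Remark \ref{rem:5.2}, and then to deduce local Lipschitz continuity from the standard two-sided comparison trick for such maxima together with the fact that $\cJ \in \cC^1(X)$. Concretely, for $u,v$ near a fixed point $u_0 \in X \setminus \{0\}$, and for any maximizers $t_u \in [t_{\min}(u), t_{\max}(u)]$ and $t_v \in [t_{\min}(v), t_{\max}(v)]$, the inequalities $(\cJ \circ \hat m)(u) \geq \cJ(t_v u)$ and $(\cJ \circ \hat m)(v) \geq \cJ(t_u v)$ sandwich the difference $(\cJ \circ \hat m)(u) - (\cJ \circ \hat m)(v)$ between $\cJ(t_v u) - \cJ(t_v v)$ and $\cJ(t_u u) - \cJ(t_u v)$.

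The real work is the preparatory step: showing that there exists an open neighborhood $U$ of $u_0$ in $X \setminus \{0\}$ and constants $0 < \alpha \leq \beta < \infty$ such that
$$
[t_{\min}(u), t_{\max}(u)] \subset [\alpha, \beta] \qquad \text{for every } u \in U.
$$
For the lower bound, if along a sequence $u_n \to u_0$ one had $t_{\min}(u_n) \to 0$, then $t_{\min}(u_n) u_n \to 0$ in $X$ and the $\cC^1$ behavior of $\cJ$ at the origin would force $\cJ(t_{\min}(u_n)u_n) \to 0$, contradicting the uniform lower bound $(\cJ \circ \hat m)(u_n) \geq r^2/4$ produced by (J1) (evaluated at the rescaling $r u_n/\|u_n\|$). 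For the upper bound, if $t_{\max}(u_n) \to +\infty$, then (J2) yields $\cJ(t_{\max}(u_n)u_n)/t_{\max}(u_n)^2 \to -\infty$, again contradicting the uniform positivity of $(\cJ \circ \hat m)(u_n)$.

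Once this two-sided control is available, shrink $U$ further to a bounded ball around $u_0$; then the set $K := \{ tu : u \in U, \ t \in [\alpha, \beta] \}$ is a bounded subset of $X$ on which the derivative $\cJ'$ is bounded, say by $L_0$. Applying the mean value inequality along the segments from $t u$ to $t v$ (which lie in $K$ after a final convex shrinking of $U$), the sandwiched differences are estimated by $L_0 \beta \|u - v\|$, and the local Lipschitz constant $L_0 \beta$ works uniformly on $U$.

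The main obstacle is the locally uniform positivity of $t_{\min}$ and boundedness of $t_{\max}$ around $u_0$, for which one must rule out escape to $0$ or $+\infty$; once this is secured, the rest reduces to the standard comparison argument combined with the $\cC^1$ regularity of $\cJ$ on bounded sets.
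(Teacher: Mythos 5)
Your proposal is correct and follows essentially the same route as the paper, which offers no inline proof but delegates to \cite[Proposition 2.6]{KrSz}: the standard argument there is precisely your combination of the max-characterization $(\cJ \circ \hat{m})(u) = \sup_{t>0} \cJ(tu)$ from Remark \ref{rem:5.2}, the two-sided comparison at maximizers, and the locally uniform bounds $0 < \alpha \leq t_{\min}(u) \leq t_{\max}(u) \leq \beta$ extracted by contradiction from (J1) (via the uniform bound $(\cJ\circ\hat{m})(u) \geq r^2/4$) and (J2). The one detail worth making explicit is that boundedness of $\cJ'$ on the bounded set $K$ is not a consequence of $\cC^1$ regularity alone in infinite dimensions, but in the present concrete setting it follows from the growth estimate \eqref{fg-eps} together with the Sobolev embeddings $X \hookrightarrow L^t(\R^3)$, $t \in [2,6]$.
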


Hence, the Clarke's subdifferential $\partial (\cJ \circ \hat{m})$ is well-defined and we set
$$
\mathscr{C} := \{ u \in S \ : \ 0 \in \partial (\cJ \circ \hat{m})(u) \}.
$$
We recall that $u \in S$ is called a critical point of $\cJ \circ \hat{m}$, if $0 \in \partial (\cJ \circ \hat{m})(u)$. Similarly as in \cite{KrSz} we note that $u \in S$ is a critical point of $\cJ \circ \hat{m}$ if and only if $m(u)$ consists of critical points of $\cJ$. One can also show the similar correspondence between Palais-Smale sequences.

\begin{proof}[Proof of Theorem \ref{Th:4}]
We choose a subset $\cF \subset \mathscr{C}$ such that $\cF = - \cF$ and each orbit in $\mathscr{C}$ has a unique representative in $\cF$. We assume, by the contradiction, that $\cF$ is a finite set. Then, repeating the proof of \cite[Proposition 3.1]{KrSz}, \cite[Lemma 2.14]{SzW} we show the so-called \textit{discreteness of Palais-Smale sequences}. The discreteness of Palais-Smale sequencess, evenness and $\Z$-invariance of $\cJ$ allows us to repeat proofs of \cite[Proposition 3.2, Proposition 3.3]{KrSz}, \cite[Lemma 2.15, Lemma 2.16]{SzW} in our case. Hence, we show that there is an increasing, infinite sequence $c_1 < c_2 < c_3 < \ldots$ of critical values of $\cJ \circ \hat{m}$. It is a contradiction with the fact that $\cF$ is finite.
\end{proof}

\section*{Acknowledgements}
Bartosz Bieganowski was partially supported by the National Science Centre, Poland (Grant No. 2017/25/N/ST1/00531).

\end{document}